\numberwithin{equation}{section}
\theoremstyle{plain}
\newtheorem{theorem}{Theorem}[section]
\newtheorem{corollary}[theorem]{Corollary}
\newtheorem{lemma}[theorem]{Lemma}
\newtheorem{proposition}[theorem]{Proposition}
\theoremstyle{definition}
\newtheorem{example}[theorem]{Example}
\theoremstyle{remark}
\newtheorem{remark}[theorem]{Remark}
\newcommand{\R}{\mathbb{R}}
\newcommand{\Q}{\mathbb{Q}}
\newcommand{\Z}{\mathbb{Z}}
\newcommand{\C}{\mathbb{C}}
\renewcommand{\H}{\mathbb{H}}
\newcommand{\G}{\mathbb{G}}
\newcommand{\zxz}[4]{\begin{pmatrix} #1 & #2 \\ #3 & #4 \end{pmatrix}}
\newcommand{\leg}[2]{\left( \frac{#1}{#2} \right)}
\newcommand{\kzxz}[4]{\left(\begin{smallmatrix} #1 & #2 \\ #3 & #4\end{smallmatrix}\right) }
\newcommand{\kabcd}{\kzxz{a}{b}{c}{d}}
\newcommand{\calD}{\mathcal{D}}
\newcommand{\calL}{\mathcal{L}}
\newcommand{\calM}{\mathcal{M}}
\newcommand{\calO}{\mathcal{O}}
\newcommand{\calQ}{\mathcal{Q}}
\newcommand{\calT}{\mathcal{T}}
\newcommand{\frakm}{\mathfrak m}
\newcommand{\eps}{\varepsilon}
\newcommand{\bs}{\backslash}
\newcommand{\tr}{\operatorname{tr}}
\newcommand{\sgn}{\operatorname{sgn}}
\newcommand{\Cl}{\operatorname{Cl}}
\newcommand{\Sl}{\operatorname{SL}}
\newcommand{\SL}{\operatorname{SL}}
\newcommand{\Mp}{\operatorname{Mp}}
\newcommand{\Orth}{\operatorname{O}}
\newcommand{\Hom}{\operatorname{Hom}}
\newcommand{\Aut}{\operatorname{Aut}}
\newcommand{\Mat}{\operatorname{Mat}}
\newcommand{\Iso}{\operatorname{Iso}}
\newcommand{\dv}{\operatorname{div}}
\newcommand{\res}{\operatorname{res}}
\newcommand{\ii}{\text{\rm add}}
\newcommand{\SO}{\operatorname{SO}}
\newcommand{\PSL}{\operatorname{PSL}}
\newcommand{\supp}{\operatorname{supp}}
\newcommand{\Div}{\operatorname{Div}}
\newcommand{\ord}{\operatorname{ord}}
\newcommand{\disc}{\operatorname{disc}}
\newcommand{\mf}{\mathfrak{m}}
\begin{document}

\title[Heegner divisors in generalized Jacobians and singular moduli]{Heegner divisors in generalized Jacobians \\and traces of singular moduli}

\author[Jan H.~Bruinier and Yingkun Li]{Jan
Hendrik Bruinier and Yingkun Li}
\address{Fachbereich Mathematik,
Technische Universit\"at Darmstadt, Schlossgartenstrasse 7, D--64289
Darmstadt, Germany}
\email{bruinier@mathematik.tu-darmstadt.de}
\email{li@mathematik.tu-darmstadt.de}
\subjclass[2010]{14G35, 14H40, 11F27,  11F30}

\thanks{The authors are partially supported by DFG grant BR-2163/4-1.}

\date{\today}

\begin{abstract}
In the present paper we prove an abstract modularity result for classes of Heegner divisors in the generalized Jacobian of a modular curve associated to a cuspidal modulus. Extending the Gross-Kohnen-Zagier theorem, we prove that the generating series of these classes is a weakly holomorphic modular form of weight $3/2$. Moreover, we show that
any harmonic Maass forms of weight $0$ defines a functional on the generalized Jacobian. Combining these results, we obtain a unifying framework and new proofs for the Gross-Kohnen-Zagier theorem and Zagier's modularity of traces of singular moduli, together with new geometric interpretations of the traces with non-positive index.
\end{abstract}

\maketitle


\section{Introduction}
\label{sect:intro}

The celebrated Gross-Kohnen-Zagier theorem \cite{GKZ} states that the generating series of Heegner divisors on the modular curve $X_0(N)$ is a cusp form of weight $3/2$ with values in the Jacobian of $X_0(N)$. This result was later generalized by various authors to orthogonal and unitary Shimura varieties of higher dimension, see e.g.~\cite{Bo2}, \cite{Ku:MSRI}, \cite{Liu}.

In a different direction, Zagier \cite{Za2} proved that the traces of the normalized $j$-invariant over Heegner divisors of discriminant $-d$ on the modular curve $X(1)$ are the coefficients of a {\em weakly} holomorphic modular form of weight $3/2$. This result was also generalized in subsequent work to modular curves of arbitrary level, traces of harmonic Maass forms over twisted Heegner divisors, and to cover more general non-positive weight modular functions, see e.g.~\cite{AE}, \cite{BOR}, \cite{BF2}, \cite{DJ}, \cite{Fu}, \cite{Ki}.
Recently, Gross \cite{Gr} has explained how Zagier's original result can be related to their earlier joint result with Kohnen. He showed that the traces of singular moduli on $X(1)$ can be interpreted in terms of
Heegner divisors in the {\em generalized} Jacobian associated with the modulus $2\cdot (\infty)$.

In the present paper, we pick up Gross' idea
and define classes of Heegner divisors of arbitrary
discriminant in the generalized Jacobian $J_\frakm(X)$ of a modular curve $X$ of arbitrary level with cuspidal modulus $\frakm$. Then we prove that the generating series of these classes is a weakly holomorphic modular form of weight $3/2$ with values in $J_\frakm(X)$. Our argument is a generalization of Borcherds' proof \cite{Bo2} of the Gross-Kohnen-Zagier theorem and relies on the construction of explicit relations among Heegner divisors given by automorphic products. Note that in contrast to \cite{Bo2}, we need to use the explicit infinite product expansions of automorphic products at all cusps of $X$. By applying the natural map between $J_\frakm(X)$ and the usual Jacobian $J(X)$ to this generating series, we recover the `classical' Gross-Kohnen-Zagier theorem.

Then we show that every harmonic Maass form $F$ of weight $0$ on $X$ with vanishing constant term at every cusp (such as the normalized $j$-function when $X=X(1)$), defines a functional $\tr_F$ on $J_\frakm(X)$. The value of $\tr_F$ on Heegner divisors of negative discriminant $-d$ is just the sum of the values of $F$ over the Heegner points of discriminant $-d$. The value of $\tr_F$ on `Heegner divisors' of non-negative discriminant can be explicitly computed in terms of the principal parts of $F$ at the cusps. In that way we are able to recover Zagier's result and its generalisations in \cite{AE}, \cite{BF2}.

We now describe the content of the present paper in more detail. To simplify the exposition, throughout this  introduction we let $p$ be prime or $1$ and consider the modular curve $X_0^*(p)$ associated to the extension $\Gamma_0^*(p)$  of $\Gamma_0(p)$ in $\PSL_2(\Z)$ by the Fricke involution.
In the body of this paper, we consider modular curves of arbitrary level (as modular curves associated to orthogonal groups of signature $(1,2)$).

Let $\infty$ be the cusp of $X_0^*(p)$ and let $m$ be a non-negative integer.
Then $\frakm=m\cdot (\infty)$ is an effective divisor. Recall that the generalized Jacobian $J_\frakm(X_0^*(p))$ of $X_0^*(p)$ associated with the modulus $\frakm$ is a commutative algebraic group
whose  rational points correspond to classes of divisors of degree zero modulo $\frakm$-equivalence, see Section \ref{sect:2} and \cite{Se}. When $m=0$, then $J_\frakm(X_0^*(p))$ is simply the  usual Jacobian.
For any integer $d$, let $\calQ_{p,d}$ be the set of (positive definite if $d>0$) integral binary quadratic forms $[a,b,c]$ of discriminant $-d=b^2-4ac$ with $p$ dividing $a$.
If $d\neq 0$ then $\Gamma_0^*(p)$ acts on $\calQ_{p,d}$ with  finitely many orbits.

If $d$ is positive, then any $Q\in \calQ_{p,d}$ defines a point $\alpha_Q$ in the upper complex half plane $\H$, the solution of the equation $az^2+bz+c=0$ with positive imaginary part. There is a corresponding Heegner divisor of discriminant $-d$ on $X_0^*(p)$
given by
\[
Y(d)= \sum_{Q\in \calQ_{p,d}/\Gamma_0^*(p)} \frac{1}{|\Gamma_0^*(p)_Q|}\cdot  (\alpha_Q),
\]
where $\Gamma_0^*(p)_Q$ is the (finite) stabilizer of $Q$ (see (1.5) in \cite{BF2}).
The divisor
\[
Z(d)= Y(d)-\deg(Y(d))\cdot (\infty)
\]
has degree zero and is defined over $\Q$. We denote by $[Z(d)]_\frakm$ its class in the generalized Jacobian $J_\frakm(X_0^*(p))$.

If $d$ is negative,
any $Q\in \calQ_{p,d}$ defines an oriented
geodesic cycle on $\H\cup P^1(\R)$, given by the equation
$a|z|^2+b\Re(z)+c=0$. It has nontrivial intersection with $P^1(\Q)$ if and only if $d$ is the negative of a square of an integer. In this case the two solutions in $P^1(\Q)$ define cusps of the modular curve. There is a unique cusp $c_Q\in P^1(\Q)$ from which the geodesic originates. (In the present $\Gamma_0^*(p)$-example all cusps collapse to $\infty$ under the map to the quotient, but this is of course not true for more general congruence subgroups.) If $d=-b^2$ for a non-zero  integer $b$, then $Q$ is $\Gamma_0^*(p)$-equivalent to $[0,b,c]$ with $c\in \Z/b\Z$ and $c_Q$ is equivalent to $\infty$. We let $h_Q\in \Q(X_0^*(p))^\times$ be a function satisfying
\[
h_Q= 1-q_\infty^b+O(q_\infty^m)
\]
at the cusp $\infty$, where $q_\infty$ is the uniformizing parameter of the completed local ring at $\infty$ given by the Tate curve over $\Z[[q_\infty]]$.
Then we define
\[
[Z(d)]_\frakm = [\dv(h_{[0,b,0]})]_\frakm=\sum_{Q\in \calQ_{p,d}/\Gamma_0^*(p)} \frac{1}{b}\cdot [\dv(h_Q)]_\frakm.
\]
Note that this class vanishes if $d\leq -m^2$.
If $d<0$ is not
the negative of the square of an integer,
we put $[Z(d)]_\frakm=0$. Finally, for $d=0$, we define $[Z(0)]_\frakm$ as the class of the line bundle of modular forms $\calM_{-1}$ of weight $-1$ on $X_0^*(p)$ (see Section \ref{sect:2} for details).

To describe the relations among the classes $[Z(d)]_\frakm$, we consider the generating series
\[
A_\frakm(\tau) = \sum_{\substack{d\in \Z\\ d>-m^2}} [Z(d)]_\frakm \cdot q^d \in \C((q))\otimes J_\frakm(X_0^*(p)).
\]
It is a formal Laurent series in the variable $q=e^{2\pi i\tau}$ for $\tau \in \H$. Our first main result is the following (see also Theorem \ref{thm:mod}).

\begin{theorem}
\label{thm:intromod}
The generating series $A_\frakm(\tau)$ is a weakly holomorphic modular form of weight $3/2$ for the group $\Gamma_0(4p)$, that is,
$A_\frakm(\tau)\in M_{3/2}^!(\Gamma_0(4p))\otimes J_\frakm(X_0^*(p))$.
\end{theorem}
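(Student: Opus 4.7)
The plan is to adapt Borcherds' proof of the classical Gross--Kohnen--Zagier theorem \cite{Bo2}, with the new input being control over the Borcherds lift in the completed local ring at the cusp, up to order $m$. By the standard duality between $M_{3/2}^!(\Gamma_0(4p))$ and $M_{1/2}^!(\Gamma_0(4p))$ in the Kohnen plus-space, a formal Laurent series $\sum_d a(d) q^d$ represents an element of $M_{3/2}^!$ if and only if $\sum_n a(n) b(-n) = 0$ for every $g = \sum_n b(n) q^n \in M_{1/2}^!$ (the pairing being a finite sum once both series are weakly holomorphic). Applied coefficient-wise to $A_\frakm$, the theorem reduces to the following statement: for every $g\in M_{1/2}^!$ with integral principal part, the relation
\[
\sum_d b(-d)\cdot [Z(d)]_\frakm = 0
\]
holds in $J_\frakm(X_0^*(p))\otimes\Q$.

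To produce such relations, I would use the Borcherds lift $\Psi(g)$, a meromorphic modular form on $X_0^*(p)$ of weight $b(0)$ whose divisor on the compactified curve equals $\sum_{d>0} b(-d) Y(d)$ together with explicit cuspidal contributions prescribed by the coefficients $b(-n^2)$ for $n\geq 0$. Writing $Y(d) = Z(d) + \deg(Y(d))\cdot (\infty)$ and noting that $[\calM_{b(0)}] = -b(0)\cdot [\calM_{-1}] = -b(0)\cdot [Z(0)]_\frakm$, the divisor-class identity for $\Psi(g)$ in $\Pic(X_0^*(p))$ yields the relation $\sum_d b(-d)[Z(d)] = 0$ in the usual Jacobian; this is the classical Gross--Kohnen--Zagier theorem, recovered via the natural map $J_\frakm \to J$. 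To promote the identity to $J_\frakm$ itself, one must also verify that $\Psi(g)$, viewed as an element of the completed local ring $\C[[q_\infty]]$ at $\infty$ modulo $q_\infty^m$, represents the contribution from the classes $[Z(-n^2)]_\frakm$ for $n\geq 1$. This is supplied by Borcherds' infinite product formula: the expansion of $\Psi(g)$ at $\infty$ has the shape $q_\infty^{\rho(g)}\prod_{n\ge 1}(1-q_\infty^n)^{c(n)}$, where $\rho(g)$ and $c(n)$ are explicit linear combinations of the $b(-n^2)$. Truncating modulo $q_\infty^m$ and comparing with the definition of $[Z(-n^2)]_\frakm$ via the auxiliary functions $h_Q$ then realizes the required relation.

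The main obstacle will be establishing the precise matching between the truncated Borcherds product expansion and the $h_Q$-data defining $[Z(-n^2)]_\frakm$ for $0<n<m$, together with the analogous verification at \emph{every} cusp of the modular curve in the general level setting treated in the body of the paper. Borcherds' theorem gives the product expansion essentially only at $\infty$; the expansion at other cusps requires a finer analysis of the singular theta correspondence and of the Weil representation at each cusp, which is the technical heart of the paper as emphasized in the introduction. Once the product expansions are explicitly computed and matched with the cuspidal moduli data at every cusp, the relations among $[Z(d)]_\frakm$ produced by Borcherds lifts span the space of relations forced by the duality, and the theorem follows.
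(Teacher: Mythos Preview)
Your proposal is correct and follows essentially the same route as the paper: reduce via the Borcherds modularity criterion (Proposition~\ref{prop:modcrit}) to relations indexed by $M_{1/2}^!$, then manufacture those relations from the divisor and the cuspidal infinite product expansion of the Borcherds lift, matching the truncated factors $(1-q_I^n)$ with the functions $h_\lambda$ that define the negative-index classes. Note that in the $X_0^*(p)$ setting of Theorem~\ref{thm:intromod} there is only one cusp, so the multi-cusp matching you flag as the main obstacle is actually vacuous here; it only enters in the general-level Theorem~\ref{thm:mod}, where the paper uses the expansions~\eqref{eq:BP} at every cusp and passes to the quotient $J_\frakm^{\ii}$ to absorb the leading constants $R_I$.
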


Under the natural map
\[
J_\frakm(X_0^*(p))\longrightarrow J(X_0^*(p))
\]
the classes $[Z(d)]_\frakm$ with $d\leq 0$ are mapped to zero. Applying it to $A_\frakm(\tau)$, we recover the Gross-Kohnen-Zagier theorem (see also Corollary \ref{cor:gkz}).

\begin{corollary}[Gross-Kohnen-Zagier]
The generating series $A_0(\tau)$ of classes of Heegner divisors $[Z(d)]_0$
in the Jacobian
is a cusp form of weight $3/2$ for the group $\Gamma_0(4p)$, that is,
$A_0(\tau)\in S_{3/2}(\Gamma_0(4p))\otimes J(X_0^*(p))$.
\end{corollary}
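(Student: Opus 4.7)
The plan is to derive the corollary directly from Theorem \ref{thm:intromod} by pushing the generating series forward along the natural projection $\pi\colon J_\frakm(X_0^*(p)) \to J(X_0^*(p))$. Since $\pi$ is a homomorphism of algebraic groups over $\Q$, applying it coefficient-wise to $A_\frakm(\tau)$ preserves modularity, so $\pi(A_\frakm(\tau))$ still lies in $M_{3/2}^!(\Gamma_0(4p)) \otimes J(X_0^*(p))$.

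The key input is the vanishing $\pi([Z(d)]_\frakm) = 0$ for every $d \leq 0$, which is asserted in the excerpt. For $d < 0$ this is transparent from the definition: $[Z(d)]_\frakm$ is the $\frakm$-equivalence class of $\dv(h_Q)$ for a rational function $h_Q$, and such a class is sent to zero in the ordinary Jacobian. For $d = 0$, it amounts to identifying the image under $\pi$ of the class $[Z(0)]_\frakm$ attached to the line bundle $\calM_{-1}$ with the trivial class in $J(X_0^*(p))$; this is a short check from the construction of $\pi$ and the degree-zero normalization of the representative of $\calM_{-1}$. Granting these vanishings, $\pi$ sends $A_\frakm(\tau)$ to $\sum_{d > 0} [Z(d)]_0 \, q^d = A_0(\tau)$, exhibiting $A_0(\tau)$ as an element of $M_{3/2}^!(\Gamma_0(4p)) \otimes J(X_0^*(p))$ whose Fourier expansion at $\infty$ is supported in strictly positive exponents. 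In particular it is holomorphic at $\infty$ with zero constant term.

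The step that I expect to require the most care is upgrading cuspidality at $\infty$ to cuspidality at every cusp of $\Gamma_0(4p)$, since a priori a weakly holomorphic form on $\Gamma_0(4p)$ can be holomorphic at one cusp while having poles at others. Here I would appeal to the vector-valued setup that underlies the body of the paper: the series $A_\frakm(\tau)$ is the scalar avatar, via a Kohnen-type isomorphism, of a vector-valued modular form for the Weil representation associated with the even lattice of signature $(1,2)$ attached to $\Gamma_0^*(p)$. In the vector-valued formulation there is a single cusp at $\infty$, and vanishing of the principal part there is equivalent to cuspidality at every cusp of the underlying congruence cover. Combining this with the Fourier expansion computed above yields $A_0(\tau) \in S_{3/2}(\Gamma_0(4p)) \otimes J(X_0^*(p))$, which is the Gross--Kohnen--Zagier statement.
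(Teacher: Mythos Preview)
Your approach is correct and matches the paper's: apply the projection $J_\frakm(X_0^*(p))\to J(X_0^*(p))$ coefficientwise to $A_\frakm$, observe that the classes $[Z(d)]_\frakm$ with $d\le 0$ die (for $d<0$ they are divisors of rational functions, and $[\calM_{-1}]_0=0$ in $J(X)_\Q$ since some power of $\calM_1$ admits a section---a power of $\Delta$---with divisor supported at the unique cusp), and conclude modularity with only positive Fourier exponents. Your concern about cuspidality at all cusps of $\Gamma_0(4p)$ is well-placed and your resolution is exactly the one the paper uses implicitly: the result in the body (Corollary~\ref{cor:gkz}) is proved for the vector-valued form transforming under $\omega_L^\vee$ for $\Mp_2(\Z)$, where there is a single cusp, and the scalar statement for $\Gamma_0(4p)$ is then obtained via the Kohnen plus-space isomorphism of \cite[\S5]{EZ} described in Section~\ref{sect:5.1}, which sends cusp forms to cusp forms.
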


To recover the results of \cite{Za2} and \cite{BF2} on traces of modular functions from Theorem \ref{thm:intromod}, we show that harmonic Maass forms define functionals on $J_\frakm(X_0^*(p))$. Let $F\in H_0^+(\Gamma_0^*(p))$ be a harmonic Maass form for $\Gamma_0^*(p)$ of weight $0$ as in \cite{BF1}. Denote the Fourier expansion of the holomorphic part of $F$ by
\[
F^+(\tau)=\sum_{n\gg-\infty}c_F^+(n) \cdot q_\infty^n.
\]

\begin{proposition}
Assume that $c_F^+(n)=0$ for $n\leq - m$ and  $c_F^+(0)=0$.
Then there is a linear map
$\tr_F:J_\frakm(X_0^*(p))\to\C$
defined by
\begin{align*}
[D]_\frakm\mapsto \tr_F(D):=\sum_{\substack{a\in \supp(D)\setminus \{\infty\}}} n_a \cdot F(a)
\end{align*}
for divisors $D=\sum_a n_a \cdot(a)$ in $\Div^0(X_0^*(p))$.
\end{proposition}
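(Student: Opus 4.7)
The content of the proposition is the well-definedness of $\tr_F$ on $\frakm$-linear equivalence classes. Because $F$ is $\Gamma_0^*(p)$-invariant and real-analytic on the non-cuspidal locus of $\H$, each $F(a)$ is a well-defined complex number for $a\in X_0^*(p)\setminus\{\infty\}$, and $\sum_{a\neq\infty} n_a F(a)$ is a finite sum for any $D=\sum_a n_a(a)\in\Div^0(X_0^*(p))$. Two divisors define the same class in $J_\frakm(X_0^*(p))$ iff they differ by $\dv(f)$ for some $f\in\Q(X_0^*(p))^\times$ with $f\equiv 1\pmod{\frakm}$; since $f-1$ vanishes to order $\geq m\geq 1$ at $\infty$, such $f$ is a unit at $\infty$ and $\dv(f)$ is supported away from $\infty$. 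The task reduces to showing $\tr_F(\dv(f))=\sum_{a\neq\infty}\ord_a(f)\,F(a)=0$ for every such $f$.

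The natural approach is a Stokes'/residue calculation with the smooth, $\Gamma_0^*(p)$-invariant $(1,0)$-form $\omega := F\cdot d\log f$. On the truncated fundamental domain $\calF_T := \calF\cap\{y\leq T\}$, with small $\varepsilon$-disks $B_\varepsilon(a)$ excised around the zeros and poles of $f$ in $\calF$, identified sides of $\partial\calF_T$ cancel by the $\Gamma$-invariance of $\omega$, and each small-circle integral tends to $2\pi i\cdot\ord_a(f)F(a)$ as $\varepsilon\to 0$. Using $d\omega = \bar\partial F\wedge d\log f$ (since $d\log f$ is of type $(1,0)$), Stokes' theorem yields
\[
2\pi i\cdot\tr_F(\dv(f)) \;=\; \int_{y=T} F\cdot d\log f \;-\; \int_{\calF_T} \bar\partial F\wedge d\log f .
\]
The horocyclic integral vanishes as $T\to\infty$ by a Fourier-expansion estimate at the cusp: $f\equiv 1\pmod{q_\infty^m}$ forces $d\log f=O(q_\infty^m)\,d\tau$, while the conditions $c_F^+(n)=0$ for $n\leq -m$ and $c_F^+(0)=0$, together with the exponential decay of $F^-$, give $F=O(q_\infty^{-m+1})$; hence $F\cdot d\log f=O(q_\infty)\,d\tau$ and the horocyclic contribution decays like $e^{-2\pi T}$.

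The interior term is handled through the shadow of $F$. Writing $F=F^++F^-$, one has $\bar\partial F=\bar\partial F^-$, and up to an explicit constant this equals $\overline{\xi_0(F)}\,d\bar\tau$, where $\xi_0(F)$ is the holomorphic cusp form of weight $2$ which is the shadow of $F$. Introducing an Eichler-integral primitive $G(\tau)=\int^\tau\xi_0(F)(\tau')\,d\tau'$ of $\xi_0(F)\,d\tau$ (well-defined on the universal cover), one rewrites $\bar\partial F\wedge d\log f = -\tfrac{i}{2}\,d(\bar G\cdot d\log f)$, an exact form away from $\supp\dv(f)$. A second Stokes' theorem converts $\int_{\calF_T}\bar\partial F\wedge d\log f$ into boundary contributions: another horocyclic piece at $\infty$ that again vanishes by the Fourier estimates, small-circle contributions proportional to $\sum_a\ord_a(f)\,\bar G(a)$, and identified-side contributions formed by pairing the periods of $\xi_0(F)$ along a basis of $H_1(X_0^*(p),\Z)$ with the winding numbers of $f$ along the same cycles. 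Abel's theorem applied to the principal divisor $\dv(f)$ identifies these two contributions, forcing the interior term to vanish. The main obstacle lies precisely in this second Stokes step: because $G$ is multi-valued on $X_0^*(p)$, one must carefully track its monodromy along identified sides of $\partial\calF$ and match the resulting period integrals with the Abel--Jacobi image of $\dv(f)$. This is the analytic heart of the proof and explains why the hypotheses on the principal part of $F^+$ at $\infty$ correspond exactly to the modulus $\frakm$ defining $J_\frakm(X_0^*(p))$.
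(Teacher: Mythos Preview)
Your setup and the first Stokes step match the paper's argument: one reduces to showing that the interior integral $\int \bar\partial F\wedge d\log f$ vanishes. Where you diverge is in how you handle this term. You propose to find an antiderivative on the $\bar\partial F$ side, introducing the Eichler integral $G$ with $dG=\xi_0(F)\,d\tau$, and then invoke Abel's theorem to cancel the monodromy of $\bar G$ against the point contributions $\sum_a\ord_a(f)\,\bar G(a)$. This can in principle be made to work, but as you yourself note it is the ``main obstacle'': one must match the side-identification periods precisely (not merely modulo the period lattice) with the specific combination arising from the winding numbers of $f$, and you do not actually carry this out.

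The paper avoids this obstacle entirely by integrating by parts in the \emph{other} direction. Instead of taking a primitive of $\bar\partial F$, it takes a primitive of $d\log f=\partial\log|f|^2$, namely the real-valued function $\log|f|^2$. This function is single-valued on $X$ (no monodromy) and has only logarithmic singularities at the support of $\dv(f)$. Since $\partial(\bar\partial F)=0$ (because $\bar\partial F=\overline{\xi_0(F)}\,d\bar\tau$ with $\xi_0(F)$ holomorphic), one gets
\[
\int_{X_\eps}\bar\partial F\wedge\partial\log|f|^2
= -\int_{X_\eps}\partial\bigl((\bar\partial F)\log|f|^2\bigr)
= -\int_{\partial X_\eps}(\bar\partial F)\log|f|^2,
\]
and the right-hand side tends to $0$ as $\eps\to 0$ because $\bar\partial F$ is smooth on all of $X$ while $\log|f|^2$ blows up only logarithmically. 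This one-line trick replaces your entire Abel--Jacobi argument. The moral: when one factor of a wedge is globally exact with a single-valued primitive, integrate by parts on that side.
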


The images under $\tr_F$ of the classes $[Z(d)]_\frakm$ with $d\leq 0$ can be explicitly computed in terms of the principal part of $F$. As a consequence we derive (see Theorem \ref{thm:tr}):

\begin{theorem}
\label{thm:introtr}
The series $\tr_F(A_\frakm)$ is a weakly holomorphic modular form in $M_{3/2}^!(\Gamma_0(4p))$. It is explicitly given by
\begin{align*}
\tr_F(A_\frakm)&
=\sum_{d>0} F\big(Y(d)\big)\cdot q^d
+\sum_{n\geq 1}c_{F}^+(-n)\big(\sigma_1(n)+p\sigma_1(n/p)\big)
-\sum_{b>0} \sum_{n>0}  c_F^+(-bn)\cdot b\cdot q^{-b^2}.
\end{align*}
\end{theorem}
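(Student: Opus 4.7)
The plan is to apply the $\C$-linear functional $\tr_F$ termwise to Theorem \ref{thm:intromod} and identify the Fourier coefficients case by case. Modularity of $\tr_F(A_\frakm)\in M_{3/2}^!(\Gamma_0(4p))$ is immediate from Theorem \ref{thm:intromod} and linearity of $\tr_F$, so the content of the statement lies in the coefficient identification. For $d > 0$, the class $[Z(d)]_\frakm$ is represented by $Y(d) - \deg(Y(d))\cdot (\infty)$, and since $\tr_F$ ignores the cusp $\infty$, the $d$-th coefficient is simply $\tr_F([Z(d)]_\frakm) = F(Y(d))$, matching the first sum in the formula.

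For $d = -b^2 < 0$, I would use $[Z(-b^2)]_\frakm = [\dv(h)]_\frakm$ with $h := h_{[0,b,0]} = 1 - q_\infty^b + O(q_\infty^m)$. Since $h(\infty) = 1$, the cusp lies outside $\supp \dv(h)$ and $\tr_F([Z(-b^2)]_\frakm) = \sum_a \ord_a(h) F(a)$, summed over the zeros and poles of $h$ in $X_0^*(p)\setminus\{\infty\}$. The contribution of the holomorphic part $F^+$ is computed by the residue theorem on $X_0^*(p)$ applied to the meromorphic $1$-form $F^+\, d\log h$: interior residues give $\ord_a(h) F^+(a)$, and the residue at $\infty$ is extracted from the expansion $\log h = -\sum_{n \geq 1} q_\infty^{bn}/n + O(q_\infty^m)$ together with the $q$-expansion of $F^+$, producing (after tracking the orientation) the stated $-b \sum_{n \geq 1} c_F^+(-bn)$; the $O(q_\infty^m)$ tail is irrelevant since $c_F^+(-n) = 0$ for $n \geq m$ by hypothesis. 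The non-holomorphic contribution $\sum_a \ord_a(h) F^-(a)$ must vanish; I would show this by applying Stokes' theorem to $F^-\cdot d\log h$ on $X_0^*(p)$ minus a small disc around $\infty$, reducing it to a period pairing between the weight-$2$ cusp form $\xi(F) \in S_2(\Gamma_0^*(p))$ and $\log h$, which vanishes because $\log h$ is cuspidal at $\infty$ and $\xi(F)$ is holomorphic and cuspidal. This is essentially the same mechanism that forces $\tr_F$ to descend to $J_\frakm$ in the preceding Proposition.

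For $d = 0$, the class $[Z(0)]_\frakm = [\calM_{-1}]_\frakm$ is related to the weight-$2$ Eisenstein series $\tilde E_2(\tau) := E_2(\tau) + p E_2(p\tau) = (1+p) - 24 \sum_{n \geq 1}(\sigma_1(n) + p \sigma_1(n/p)) q^n$ on $\Gamma_0^*(p)$: its divisor represents $[\calM_2] = -2[\calM_{-1}]$ in $\Pic(X_0^*(p))$, and since $\tilde E_2(\infty) = 1+p \ne 0$, the cusp $\infty$ again lies outside the divisor support. A residue calculation parallel to the $d = -b^2$ case, now applied to $F^+ \cdot d\log \tilde E_2$, produces the constant term $\sum_{n \geq 1} c_F^+(-n)(\sigma_1(n) + p \sigma_1(n/p))$, with the extra factor of $1+p$ arising naturally from the leading coefficient of $\tilde E_2$ at $\infty$.

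The principal technical obstacle is the vanishing of the $F^-$-contributions in the $d \leq 0$ coefficients, which requires the Stokes-theorem argument above; once granted, the entire coefficient identification reduces to elementary residue extractions at $\infty$ from the explicit $q$-expansions of $h_{[0,b,0]}$ and $\tilde E_2$.
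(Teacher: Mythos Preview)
Your treatment of the coefficients with $d>0$ and $d<0$ is essentially the paper's: the residue/Stokes argument you sketch for killing the $F^-$-contribution is precisely the proof of Proposition~\ref{prop:psi}, and the evaluation of $F(\dv(h_\lambda))$ as a residue of $F^+\,d\log h_\lambda$ at the cusp is the paper's formula~\eqref{eq:fhn}.

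The gap is in the $d=0$ step. The series $\tilde E_2(\tau)=E_2(\tau)+pE_2(p\tau)$ is only \emph{quasi}-modular for $\Gamma_0^*(p)$; it is not a section of $\calM_2$, it has no well-defined divisor on $X_0^*(p)$, and the residue theorem cannot be applied to $F^+\cdot d\log\tilde E_2$ on the compact curve. (There is in fact no holomorphic weight-$2$ form on $\Gamma_0^*(p)$ with nonzero constant term: for $p=1$ the space is zero, and the holomorphic Eisenstein series $E_2(\tau)-pE_2(p\tau)$ on $\Gamma_0(p)$ is anti-invariant under Fricke.) The paper instead realizes $[\calM_{-1}]_\frakm$ through a genuine cusp form: one takes $\Delta(\tau)\Delta(p\tau)$, a weight-$24$ section for $\Gamma_0^*(p)$, corrects its local expansion at $\infty$ to meet condition~\eqref{eq:scond} by multiplying with suitable powers of the functions $h_\lambda$, and then reduces $\tr_F([\calM_{-1}]_\frakm)$ to the already-computed values $F(\dv(h_\lambda))$; see Proposition~\ref{prop:ex}. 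Your computation is salvageable exactly because $\tilde E_2=q\,\tfrac{d}{dq}\log\bigl(\Delta(\tau)\Delta(p\tau)\bigr)$, so the formal residue you wrote down is the right one once $\tilde E_2$ is replaced by this honest section. Note also that no ``extra factor of $1+p$'' survives: the constant term $1+p$ of $\tilde E_2$ is paired with $c_F^+(0)=0$.
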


The modularity of the right hand side was also proved in \cite{BF2} by interpreting it as the Kudla-Millson theta lift of $F$. 
Applying this theorem to the special case where $p=1$, $m\geq 2$, and  $F=j-744$, Zagier's original result on traces of singular moduli can be obtained.

In the body of the paper we work with modular curves of arbitrary level associated with orthogonal groups of even lattices of signature $(1,2)$. This setup is natural, since the proof of Theorem \ref{thm:intromod} implicitly relies on the singular theta correspondence for the dual reductive pair given by $\Sl_2$ and $\Orth(1,2)$.
For the modulus we allow arbitrary effective divisors that are supported on the cusps. The generating series of Heegner divisors is then a vector valued modular form for the metaplectic extension of $\SL_2(\Z)$  transforming with the Weil representation of a finite quadratic module.

This paper is organized as follows. In Section 2 we recall some basic facts on generalized Jacobians of curves. Section 3 contains our setup for modular curves associated to orthogonal groups, Heegner divisors, and vector valued modular forms.
Then we define classes of Heegner divisors in generalized Jacobians in Section 4, and prove the abstract modularity theorem for these classes.
In Section 5 we prove that harmonic Maass forms define functionals on the generalized Jacobian and derive modularity results for the traces of harmonic Maass forms over Heegner divisors from the abstract modularity theorem. We also give some explicit examples and indicate possible generalizations in Section 6.


We thank J.~Funke, B.~Gross,  and S.~Kudla for useful conversations on the content of this paper. Moreover, we thank the referee for his/her valuable comments.


\section{Generalized Jacobians}
\label{sect:2}

Let $X$ be a complete non-singular algebraic curve over a field $k$ of
characteristic $0$. Let $\Div^0(X)$ be the group of divisors of $X$ of
degree $0$ defined over $k$, and denote by $P(X)$ the subgroup of divisors of rational functions $f\in k(X)^\times$.  The
Jacobian $J(X)$ of $X$ is a commutative algebraic group over $k$ whose
$k$-rational points are isomorphic to the quotient group
$\Div^0(X)/P(X)$.

Recall that there is the notion of the generalized Jacobian, see
e.g.~\cite[Chapter 5]{Se} for details.
Let $S\subset X(k)$ be a finite set of points, and for $s\in S$ let $m_s\in \Z_{\geq 0}$.
Then \[
\frakm=\sum_{s\in S} m_s \cdot(s)
\]
is an effective divisor defined over $k$.
Let $\calO_s$ be the
ring of integers in the completion $k(X)_s$ of $k(X)$ at $s$, and let
$\pi_s \in \calO_s$ be a uniformizer.  If $f,g\in k(X)_s$ and $n\in \Z$,
we write
\[
f=g+O(\pi_s^n)
\]
if $f-g\in \pi_s^n\calO_s$.
We consider the subgroup
\[
P_\frakm(X)= \{\dv(f):\; \text{$f\in k(X)^\times$ with
  $\pi_s^{-\ord_s(f)}f=1+O(\pi_s^{m_s})$ for all $s\in S$}\}
\]
of $P(X)$.
The generalized Jacobian
$J_\frakm(X)$ associated with the modulus $\frakm$ is a commutative
algebraic group over $k$, whose $k$-rational points satisfy
\begin{align}
J_\frakm(X)(k) \cong \Div^0(X)/P_\frakm(X).
\end{align}
The quotient on the right hand side is also canonically isomorphic to the subgroup of divisors in $\Div^0(X)$ coprime to $S$ modulo $\frakm$-equivalence.
For a divisor $D\in \Div^0(X)$ we denote by $[D]_\frakm$ the
corresponding class in $J_\frakm(X)(k)$.

There is a canonical rational map $\varphi_\frakm:X\to J_\frakm(X)$
defined over $k$ which is regular outside $S$, see \cite[Chapter 5,
Theorem 1]{Se}.  If $\frakm'$ is another effective divisor on $X$ satisfying
$\frakm\geq \frakm'\geq 0$, there exists
a unique homomorphism $J_\frakm\to J_{\frakm'}$ which is compatible
with $\varphi_\frakm$ and $\varphi_{\frakm'}$. It is surjective and
separable \cite[Chapter 5, Proposition 6]{Se}. In particular, there
exists a surjective homomorphism
\begin{align}
\label{eq:pm}
J_\frakm(X)\to J(X).
\end{align}
Its kernel is isomorphic to
\begin{align}
\label{eq:homker}
H_\frakm=\Big(\prod_{\substack{s\in S\\m_s>0}} \G_m\times \G_a^{m_s-1}\Big)/\G_m,
\end{align}
where the quotient is with respect to the diagonally embedded multiplicative group.
Typical elements of the kernel are obtained, by choosing a pair $(s,n)$ with $s\in S$ and $n>0$ and a function $h_{s,n} \in k(X)^\times$ such that
\begin{align}
\label{eq:hn}
h_{s,n}&=1-\pi_s^n+O(\pi_s^{m_s}),\quad\text{at $s$,}\\
\nonumber
h_{s,n}&=1+O(\pi_t^{m_t}),\quad\text{at all $t\in S\setminus\{s\}$.}
\end{align}
An argument as in \cite[Chapter 5, Proposition 8]{Se}
shows that the `additive part' of $H_\frakm$ is generated by the classes
\begin{align}
[\dv(h_{s,n})]_\frakm
\end{align}
for $s\in S$ and $0< n<m_s$. 
Note that for $n\geq m_s$ the class $[\dv(h_{s,n})]_\frakm$ vanishes.

Let $s_0\in S$ be a fixed base point. If $\calL$ is a line bundle on $X$ which is defined over $k$, and $(\phi_s)_{s\in S}$ is a family of local trivializations of $\calL$ at the points of $S$, we can associate to the pair  $(\calL,(\phi_s))$ a class in $J_\frakm(X)$ as follows.
It is easily seen that there exists a rational section $f$ of $\calL$ such that
\begin{align}
\label{eq:scond}
\phi_s^{-1}f=
\pi_s^{a_s}\cdot (1+O(\pi_s^{m_s}))
\end{align}
for some
$a_s\in \Z$ at every $s\in S$.
Then we define
\begin{align}
\label{eq:cl}
[(\calL,(\phi_s)) ]_\frakm = [\dv(f)-\deg(\calL)\cdot (s_0)]_\frakm\in J_\frakm(X)(k).
\end{align}


\section{Modular curves}
\label{sect:3}

Here we recall the description of modular curves as Shimura varieties associated to orthogonal groups.
We also define classes of Heegner divisors in generalized Jacobians.

Let $(L,Q)$ be an isotropic even lattice of signature $(1,2)$. We
denote by $(x,y)$ 
the bilinear form corresponding to
the quadratic form $Q$, normalized  such that $Q(x)=\frac{1}{2}(x,x)$.
For any commutative ring $R$ we write $L_R=L\otimes_\Z R$. Throughout we fix an orientation on $L_\R$, and write $L'$ for the dual lattice of $L$.
Let
\[
N= \min\{n\in \Z_{>0}:\; \text{$nQ(\lambda)\in \Z$ for all $\lambda\in L'$}\}
\]
be the level of $L$, and denote by $\disc(L)=|L'/L|$ the discriminant
of $L$.
We let $\SO(L)$ be the special orthogonal group of $L$ and write $\SO^+(L)$
for the intersection of $\SO(L)$ with the connected component of the
identity of $\SO(L)(\R)$.
The even Clifford algebra of $L_\Q$ is isomorphic to the matrix
algebra $\Mat_2(\Q)$, which induces an isomorphism
$\operatorname{PGL}_2(\Q)\cong \SO(L)(\Q)$.
We realize the hermitian symmetric space corresponding to $\SO(L)$ as the
domain
\[
\calD=\{z\in L_\C:\; (z,z)=0, (z,\bar z)<0\}/\C^\times.
\]
It decomposes into $2$ connected components. We fix one of these
components and denote it by $\calD^+$.

Let $\Gamma=\Gamma_L$ be the discriminant kernel subgroup of $\SO^+(L)$, that is, the kernel of the natural homomorphism
\begin{align*}
\SO^+(L)\longrightarrow \Aut(L'/L).
\end{align*}
Recall that rescaling the quadratic form by a factor $n$ does not change $\SO^+(L)$ while it replaces the discriminant kernel by the full congruence subgroup of level $n$.
We denote by
\begin{align}
\label{eq:yg}
Y_\Gamma = \Gamma\bs \calD^+
\end{align}
the non-compact modular curve associated with $\Gamma$.

Let $\Iso(L)$ be the set of isotropic lines in $L$
(i.e., primitive isotropic rank $1$ sublattices $I\subset L$). The group $\Gamma$ acts with finitely many orbits on $\Iso(L)$.
We denote by $X_\Gamma$ the compact modular curve
obtained by adding to $Y_\Gamma$ the cusps corresponding to the $\Gamma$-classes of
isotropic lines $I\in \Iso(L)$, see e.g.~\cite{BF2}.
It is well known that $X_\Gamma$ is a
projective algebraic curve which has a canonical model over a cyclotomic field.

As in \cite{BF2}, we choose an orientation on the isotropic lines as follows. We fix one line $I_0\in \Iso(L)$ together with an orientation on $I_0$ given by a basis vector $x_0\in I_{0,\R}$. For any other $I\in \Iso(L)$ we choose a $g\in \SO^+(L)(\R)$ such that $gI_{0,\R}=I_\R$. Then $g x_0\in I_\R$ defines an orientation on $I$, which is independent of the choices of $g$ and $x_0$.


Let $I\subset L$ be a primitive isotropic line and write $c_I\in X_\Gamma$ for the cusp corresponding to $I$. Local coordinates near $c_I$ can be described as follows.
We write  $N_I$ for the positive generator of the ideal $(I,L)\subset \Z$. It is a divisor of $N$.
Throughout, we let $\ell=\ell_I$ be the positive generator of $I$
and fix a vector $\ell'=\ell'_I\in L'$ such that
\begin{align}
\label{eq:ell}
(\ell,\ell')=1.
\end{align}
We let $K$ be the even negative definite lattice
\begin{align}
\label{eq:defK}
K=L\cap \ell^\perp \cap \ell'{}^\perp.
\end{align}
If $\ell_K\in K$ denotes a generator,
then $K$ is isomorphic to $\Z$ equipped with the quadratic
form $x\mapsto Q(\ell_K) x^2$.
The quantity $4Q(\ell_K)$ divides $N$.
The holomorphic map
\begin{align}
\label{eq:dcoord}
\H\longrightarrow \calD,\quad w\mapsto \C^\times \big( w\otimes \ell_K + \ell' - Q(w\otimes \ell_K)\ell-Q(\ell')\ell\big)
\end{align}
is injective and has one of the two connected components of $\calD$ as its image.
Possibly replacing $\ell_K$ by its negative, we may assume that this map is an isomorphism from $\H$ onto $\calD^+$.
It is compatible with the natural actions of $\operatorname{PGL}_2^+(\Q)$ on $\H$ by  fractional linear transformations and on $\calD^+$ via the isomorphism with $\SO^+(L)(\Q)$.
For $\mu\in L_\Q\cap I^\perp$ we consider the Eichler transformation
%
\begin{align}
E_{\ell,\mu}(x)= x +(x,\ell)\mu-(x,\mu)\ell-(x,\ell)Q(\mu)\ell
\end{align}
in $\SO^+(L)(\Q)$. It belongs to $\Gamma$ if $\mu \in K$.

\begin{lemma}
The stabilizer in $\Gamma$ of the primitive isotropic line $I$ is given by
\[
\Gamma_{I} = \{E_{\ell,\mu}:\; \mu \in K\}.
\]
\end{lemma}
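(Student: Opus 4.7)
The strategy is to verify both inclusions of the claimed identity. The inclusion $\{E_{\ell,\mu} : \mu \in K\} \subseteq \Gamma_I$ is the straightforward direction. For $\mu \in K \subset \ell^\perp$, the formula for an Eichler transformation immediately yields $E_{\ell,\mu}(\ell) = \ell$, so $E_{\ell,\mu}$ stabilizes $I$. To verify membership in the discriminant kernel $\Gamma$, I would substitute an arbitrary $x \in L'$ into the identity
\[
E_{\ell,\mu}(x) - x = (x,\ell)\mu - (x,\mu)\ell - (x,\ell)Q(\mu)\ell
\]
and observe that $(x,\ell), (x,\mu), Q(\mu) \in \Z$ because $\ell, \mu \in L$ and $K$ is even, so the right-hand side lies in $L$. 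This simultaneously shows that $E_{\ell,\mu}$ preserves $L$ (case $x \in L$) and acts trivially on $L'/L$ (case $x \in L'$).

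For the converse, I would take $g \in \Gamma_I$ and determine its action on the $\Q$-basis $\{\ell, \ell_K, \ell'\}$ of $L_\Q$. Since $g$ stabilizes $I = \Z\ell$, $\ell$ is primitive in $L$, and the orientation of $I$ is preserved by $\SO^+(L)$, we get $g\ell = \ell$. Next I would establish the auxiliary fact $\ell^\perp \cap L = \Z\ell \oplus \Z\ell_K$ by pairing an arbitrary element against $\ell' \in L'$ to force the $\ell$-coefficient to be integral, then using $L \cap K_\Q = K = \Z\ell_K$ for the $\ell_K$-coefficient. A brief determinant computation on the filtration $\R\ell \subset (\ell^\perp)_\R \subset L_\R$ (trivial on the outer quotients via $g\ell = \ell$ and $(g\ell',g\ell)=(\ell',\ell)=1$, hence trivial on the $1$-dimensional middle quotient) then yields $g\ell_K = \ell_K + n\ell$ for some $n \in \Z$. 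Finally, the relations $(g\ell', g\ell) = 1$, $(g\ell', g\ell_K) = 0$, and $Q(g\ell') = Q(\ell')$ pin down
\[
g\ell' = \ell' + \mu_0 \ell_K - \mu_0^2\, Q(\ell_K)\, \ell, \qquad \mu_0 := -\frac{n}{2 Q(\ell_K)} \in \Q.
\]

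The main obstacle is promoting $\mu_0$ from a rational number to an integer, which is exactly what is needed to rewrite $g$ as $E_{\ell,\nu}$ with $\nu \in K$. The bare condition $g(L) = L$ only gives $n \in \Z$, which is insufficient; here I would invoke the \emph{full} discriminant-kernel hypothesis. Since $\ell' \in L'$, the condition $g\ell' - \ell' \in L$ places $\mu_0 \ell_K - \mu_0^2 Q(\ell_K)\ell$ into $\ell^\perp \cap L = \Z\ell \oplus \Z\ell_K$, and reading off the $\ell_K$-coefficient shows $\mu_0 \in \Z$. Setting $\nu := \mu_0 \ell_K \in K$, a direct comparison on $\ell, \ell_K, \ell'$ using the Eichler-transformation formula verifies $E_{\ell,\nu}(\ell) = \ell = g\ell$, $E_{\ell,\nu}(\ell_K) = \ell_K - 2\mu_0 Q(\ell_K)\ell = \ell_K + n\ell = g\ell_K$, and $E_{\ell,\nu}(\ell') = g\ell'$. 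Since these agree on a $\Q$-basis of $L_\Q$, we conclude $g = E_{\ell,\nu}$, completing the proof.
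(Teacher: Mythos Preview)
Your proof is correct, but it proceeds along a somewhat different route than the paper's. Two points of comparison are worth noting.

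First, to conclude $g\ell = \ell$ (rather than $-\ell$) you invoke the orientation-preservation of isotropic lines under $\SO^+(L)$, which the paper states just before the lemma as the well-definedness of the orientation. The paper's own proof instead treats $\gamma\ell=-\ell$ as a separate case and rules it out by a spinor-norm argument: the element $\sigma$ sending $\ell\mapsto -\ell$, $\ell'\mapsto -\ell'$, $\ell_K\mapsto \ell_K$ would have to lie in the identity component, contradicting its negative spinor norm. Your approach is shorter here since it cites the earlier fact, while the paper's argument is self-contained and in effect reproves that fact inside the lemma.

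Second, to identify $g$ as an Eichler transformation, you compute the matrix of $g$ on the basis $\ell,\ell_K,\ell'$ via a filtration/determinant argument, solve for the unknown coefficients from the orthogonality relations, and then match against the Eichler formula. The paper instead writes down the candidate directly: setting $u=\gamma\ell'-\ell'\in L\cap\ell^\perp$ (this already uses the discriminant-kernel condition, just as your step $g\ell'-\ell'\in L$ does) and projecting to $v=u-(u,\ell')\ell\in K$, it checks $E_{\ell,v}(\ell')=\gamma\ell'$ and then argues that $\gamma^{-1}E_{\ell,v}$ fixes $\ell$ and $\ell'$, hence also $\ell_K$ by $\det=1$. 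Your coordinate computation is more explicit and makes the role of the discriminant-kernel hypothesis (forcing $\mu_0\in\Z$) very transparent; the paper's argument is more structural and avoids solving equations, at the cost of a small hidden verification that $E_{\ell,v}(\ell')=\gamma\ell'$.
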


\begin{proof}
Let $\gamma\in \Gamma_{I}$. Then $\gamma\ell =\pm \ell$. We first assume that $\gamma\ell = \ell$. Then
\[
u:= \gamma\ell'-\ell'
\]
belongs to $L\cap \ell^\perp$, and $v :=u-(u,\ell')\ell$ belongs
to $K$. It is easily checked that
\begin{align*}
E_{\ell,v}  (\ell)&=\ell,\\
E_{\ell,v}  (\ell')&=\gamma\ell'.
\end{align*}
Hence $\gamma^{-1}E_{\ell,v}$ leaves the vectors $\ell$ and $\ell'$ fixed.
Consequently, it maps the orthogonal complement $K$ to itself, and therefore $\ell_K$ to $\pm \ell_K$. Since $\gamma^{-1}E_{\ell,v}$ has determinant $1$, the sign must be positive and thus $\gamma=E_{\ell,v}$.

We now consider the case $\gamma\ell = -\ell$.
The orthogonal transformation $\sigma$ taking $\ell$ to $-\ell$, and $\ell'$ to $-\ell'$, and $\ell_K$ to itself belongs to $\SO(L)(\Q)$.
The element $\sigma\gamma\in \SO(L)(\Q)$ fixes $\ell$. Arguing as above, we see that it is equal to an Eichler transformation $E_{\ell,u}\in \SO^+(L)(\Q)$.
This implies that $\sigma$ belongs to the connected component of the identity of $\SO(L)(\R)$. But this leads to a contradiction, since the spinor norm of $\sigma$ is negative, showing that the case  $\gamma\ell = -\ell$ cannot occur.
\end{proof}

The action of $\Z$ on $\H$ by translations corresponds to the action of
 $\Gamma_{I}$ on $\calD^+$.
The induced map
\begin{align}
\label{eq:locpar}
\Z\bs \H\longrightarrow \Gamma_{I} \bs \calD^+,
\end{align}
is an isomorphism.
Hence,  $q_I=e^{2\pi i w}$ defines a local parameter at the
cusp $c_I$ of $X_{\Gamma}$.

\begin{example}
In the special case when $N_I=1$, then $4N=-Q(\ell_K)$ and the discriminant kernel subgroup $\Gamma$
is isomorphic
to $\Gamma_0(N/4)$. The curve $X_{\Gamma}$ is isomorphic to $X_0(N/4)$, with $c_I$ corresponding to the cusp at $\infty$, see e.g.~
\cite[Section 2.4]{BO}.
\end{example}

\subsection{The Weil representation}

Let $\Mp_2(\Z)$ be the metaplectic extension of $\SL_2(\Z)$ by $\{\pm 1\}$, realized by the two possible choices of a holomorphic square root of the automorphy factor $c\tau+d$ for $\kabcd\in \Sl_2(\Z)$, see e.g.~\cite{Bo1}, \cite{Ku:Integrals}.

Recall that there is a Weil representation $\omega_L$ of $\Mp_2(\Z)$
on the complex vector space
$S_L$ of functions $L'/L\to \C$ on the discriminant group.
Identifying $S_L$ with the space of Schwartz-Bruhat functions on
$L\otimes \hat\Q$ which are supported on $L'\otimes \hat\Z$ and
translation invariant under $L\otimes\hat\Z$, the representation
$\omega_L$ can be viewed as the restriction of the usual Weil representation of
$\Mp_2(\hat\Q)$ on $L\otimes \hat\Q$ with respect to the standard
additive character of $\hat\Q$, see \cite{Ku:Integrals}. The
representation $\omega_L$ is the complex conjugate of the
representation $\rho_L$ in \cite{Bo1}, \cite{Br1}, \cite{BF2}.
The action of $\Mp_2(\Z)$ on $S_L$ commutes with the natural action of $\Aut(L'/L)$ by translation of the argument.

If $k\in \frac{1}{2}\Z$,  we denote by $M^!_k(\omega_L)$ the space of $S_L$-valued weakly holomorphic modular forms for $\Mp_2(\Z)$ of weight $k$ with representation $\omega_L$.
The subspace of holomorphic modular forms is denoted by $M_k(\omega_L)$.

\subsection{Heegner divisors}
\label{subsec:Heegner}
For any $d\in \Q^\times$, the group $\Gamma$ acts on the set
\[
L'_d=\{\lambda\in L':\; Q(\lambda)=d\}
\]
with finitely many orbits.
For every $\lambda\in L'$ with $Q(\lambda)>0$, the stabilizer $\Gamma_\lambda \subset \Gamma$ of $\lambda$ is finite, and there is a unique point
$z_\lambda\in \calD^+$ which is orthogonal to $\lambda$.
For  $d\in \Q_{>0}$ and $\varphi\in S_L$ we consider Heegner the divisor
\begin{align}
\label{eq:defheeg}
Y(d,\varphi)=\sum_{\substack{\lambda\in L'_d/\Gamma}}\frac{1}{2|\Gamma_\lambda|}
\varphi(\lambda)\cdot (z_\lambda)
\end{align}
on $X_\Gamma$. It is defined over the field of definition of $X_\Gamma$ and has coefficients in the field of
definition of $\varphi$.
Let $I_0\in \Iso(L)$ be a fixed isotropic line.
We define a divisor of degree $0$ on
$X_\Gamma$ by putting
\begin{align}
Z(d,\varphi)=  Y(d,\varphi)-\deg (Y(d,\varphi))\cdot (c_{I_0}).
\end{align}


\section{A generalized Gross-Kohnen-Zagier theorem}
\label{sect:4}

We now consider classes of Heegner divisors in the generalized
Jacobian of the modular curve $X:=X_\Gamma$
as defined in
the previous section.  We let $k\subset \C$ be the number field obtained by adjoining the primitive root of unity $e^{2\pi i/N}$
to the common  field of definition of the canonical model and all cusps of $X$.
Let $S=\{c_I:\; I\in \Iso(L)/\Gamma\}$ be the set of cusps of $X$ and let
\[
\frakm= \sum_{I\in \Iso(L)/\Gamma} m_I\cdot (c_I)
\]
be a fixed effective divisor supported on $S$.
We consider the generalized Jacobian of $X$ associated with the modulus $\frakm$.
For $I\in\Iso(L)$, we take as the uniformizing
parameter in the completed local ring at $c_I$ 
the
parameter $q_I=e^{2\pi i w}$ defined by \eqref{eq:locpar}
(given by the Tate curve over $\Z[[q_I]]$ when $N_I=1$ such that $X_\Gamma\cong X_0(N/4)$).

Since, throughout this section, we are only interested in the
$k$-valued points of the generalized Jacobian, we briefly write
$J_\frakm(X)$ instead of $J_\frakm(X)( k)$.
For every degree zero divisor $D=\sum a_I \cdot (c_I)\in \Div^0(X)$ supported on $S$ and every tuple $r=(r_I)\in \G_m^{|S|}(k)$,
we choose a function $u_{D,r}\in k(X)^\times$ such that
\begin{align}
\label{eq:rdu}
u_{D,r}&=r_I q_I^{a_I}\cdot \left(1+O(q_I^{m_I})\right)
\end{align}
at $c_I$ for $I\in \Iso(L)$. We write $H_{\G_m,\frakm}$ for the subgroup of $J_\frakm(X)$ generated by the classes $[\dv(u_{D,r}) - D]_\frakm$ of all these functions and let
\begin{align}
\label{eq:defja}
J_\frakm^{\ii}(X)=J_\frakm(X)/ H_{\G_m,\frakm}.
\end{align}
By definition we have $J_\frakm^{\ii}(X)=J_\frakm(X)$ when $|S|=1$.
By the Manin-Drinfeld theorem we have $J_\frakm^{\ii}(X)_\Q=J_\frakm(X)_\Q$ when $\frakm=0$. For general $\frakm$ the kernel of the induced homomorphism
\begin{align}
\label{eq:indhom}
J_\frakm^{\ii}(X)_\Q\to J(X)_\Q
\end{align}
is a quotient of the product of the groups $\G_a^{m_I-1}$ for $I\in \Iso(L)/\Gamma$ with $m_I>0$.

For $d\in \Q_{>0}$ and $\varphi\in S_L$ we consider the class
\begin{align}
[Z(d,\varphi)]_\frakm\in
J_\frakm(X)_\C
\end{align}
of the Heegner divisor $Z(d,\varphi)$ in the generalized Jacobian.

Let $\calT$ be the tautological bundle on $X$, and define the line bundle of modular forms of weight $2k$ on $X$ by $\calM_{2k}=\calT^{\otimes k}$. (
Sections of $\calM_{2k}$ correspond to classical elliptic modular forms of weight $2k$ under the isomorphism $\SO(L)(\Q)\cong \operatorname{PGL}_2(\Q)$.)
Recall that $\calT$ is canonically  trivial in small neighborhoods of the cusps. Hence, taking the induced trivializations and putting $s_0=c_{I_0}$ in \eqref{eq:cl}, we obtain a class $[\calM_{k}]_\frakm\in J_\frakm(X)_\Q$.
For $d=0$ we define
\begin{align}
[Z(0,\varphi)]_\frakm
=\varphi(0)\cdot [\calM_{-1}]_\frakm.
\end{align}

We also define classes for $d\in \Q_{<0}$ as follows.
For a vector $\lambda\in L'_d$, the orthogonal complement $\lambda^\perp\subset L_\Q$ is isotropic if and only if $d\in -2\disc(L)(\Q^\times)^2$. In this case there is a unique pair of isotropic lines $I,\tilde I\in \Iso(L)$ such that
$\lambda^\perp=I_\Q\oplus \tilde I_\Q$ and such that the triple $(\lambda,x,\tilde x)$ is a positively oriented basis of $L_\Q$ for positive basis vectors $x\in I$ and $\tilde x\in \tilde I$.
Following \cite{BF2}, we call $I$ {\em the isotropic line associated to $\lambda$} and write $I\sim \lambda$. Note that  $\tilde I$ is the isotropic line associated to $-\lambda$. We define the  {\em $I$-content} 
$n_I(\mu)$ of any $\mu\in L'\cap I^\perp$ as follows: If $Q(\mu)=0$ we put $n_I(\mu)=0$. If $Q(\mu)\neq 0$ we let $n_I(\mu)$ be the unique non-zero integer such that
\begin{align}
\label{eq:defnl}
(\mu,L\cap I^\perp)= n_I(\mu)\cdot  \Z
\end{align}
and $\sgn(n_I(\mu))\cdot \mu\sim I$.

Now, if $d\in -2\disc(L)(\Q^\times)^2$ and $\lambda\in L'_d$, we let $I\in \Iso(L)$ be the isotropic line associated to $\lambda$ and let $\ell'\in L'$ be as in \eqref{eq:ell} such that $(\ell',I)=\Z$.
We choose a function $h_\lambda\in k(X)^\times$ such that
\begin{align}
\label{eq:defhl}
h_{\lambda}&=
1- e^{2\pi i(\lambda,\ell')}q_I^{ n_I(\lambda)} +O(q_I^{m_I}),\quad\text{at the cusp $c_I$,}\\
\nonumber
h_\lambda&= 1 +O(q_J^{m_J}),\quad\text{at all other  cusps $c_J$.}
\end{align}
The existence of $h_\lambda$ follows for instance from the approximation theorem for valuations, see pp.~29 in \cite{Se}. If $(\lambda,\ell')\in \Z$, then $h_{\lambda}$ agrees with the function $h_{c_I,n_I(\lambda)}\in k(X)^\times$ defined in \eqref{eq:hn}.
For $\varphi\in S_L$ we define
\begin{align}
[Z(d,\varphi)]_\frakm
=
\sum_{\substack{\lambda\in L_d'/\Gamma}}
\frac{1}{2n_I(\lambda)}(\varphi(\lambda)+\varphi(-\lambda)\big)
\cdot \left[\dv \left(h^{-1}_{\lambda}\right) \right]_\frakm.
\end{align}
It is easily checked
that this class is independent of the choices of the functions $h_\lambda$.
If $d<0$ and $d\notin -2\disc(L)(\Q^\times)^2$, we put $[Z(d,\varphi)]_\frakm=0$.

Finally, for all $d\in \Q$
we write $[Z(d)]_\frakm$ for the element of
\[
\Hom(S_L,J_\frakm(X)_\C)\cong J_\frakm(X)_\C\otimes S_L^\vee
\]
given by $\varphi\mapsto [Z(d,\varphi)]_\frakm$.

The classes $[Z(d,\varphi)]_\frakm$ with $d<0$
can also be expressed in a slightly different way.
To this end, for an isotropic line $I$ we define
\[
L'_{d,I}=\{\lambda\in L_d' :\; \text{$\lambda\perp I$ and $\lambda \sim I$}\}.
\]

\begin{lemma}
\label{lem:zd00}
For $d<0$ we have
\begin{align*}
[Z(d,\varphi)]_\frakm
=\sum_{I\in \Iso(L)/\Gamma}
\sum_{\substack{\lambda\in L_{d,I}'/I}}
\frac{1}{2}(\varphi(\lambda)+\varphi(-\lambda)\big)
\cdot \left[\dv \left(h^{-1}_{\lambda}\right) \right]_\frakm.
\end{align*}
\end{lemma}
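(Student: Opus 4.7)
The plan is to stratify the sum defining $[Z(d,\varphi)]_\frakm$ according to the $\Gamma$-orbit of the associated isotropic line of each $\lambda$, and then to convert the inner sum from one over $\Gamma_I$-orbits to one over $I$-orbits in $L'_{d,I}$; the factor $\tfrac{1}{n_I(\lambda)}$ will be absorbed precisely by the index between these two equivalence relations. If $d\notin -2\disc(L)(\Q^\times)^2$ then both sides vanish, so I may assume $d\in -2\disc(L)(\Q^\times)^2$.

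The first step is to note that the assignment $\lambda\mapsto I_\lambda$ sending a vector of $L'_d$ to its associated isotropic line is $\Gamma$-equivariant, and that any $\gamma\in\Gamma$ carrying $\lambda\in L'_{d,I}$ to $\lambda'\in L'_{d,I}$ must stabilize $I$ (so $\gamma\in\Gamma_I$), whence
\begin{align*}
L'_d/\Gamma \;=\; \bigsqcup_{[I]\in \Iso(L)/\Gamma}\; L'_{d,I}/\Gamma_I.
\end{align*}
Using the description of $\Gamma_I$ via Eichler transformations, a direct calculation gives $E_{\ell,\mu}(\lambda)=\lambda-(\lambda,\mu)\ell$ for any $\lambda\perp I$; and by the definition of $n_I(\lambda)$ together with the splitting $L\cap I^\perp=\Z\ell\oplus K$, the set $\{(\lambda,\mu):\mu\in K\}$ equals $n_I(\lambda)\Z$. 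Hence the $\Gamma_I$-action on $L'_{d,I}$ is translation by the index-$n_I(\lambda)$ sublattice $n_I(\lambda)\Z\cdot\ell$ of $I$, so each $I$-orbit decomposes into exactly $n_I(\lambda)$ distinct $\Gamma_I$-orbits.

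The crucial remaining step, and the main technical obstacle, is verifying that the summand depends only on the $I$-orbit of $\lambda$. The factors $\varphi(\pm\lambda)$ and $n_I(\lambda)$ are obviously invariant under $\lambda\mapsto\lambda+k\ell$ since $\ell\in L$. For the class $[\dv(h_\lambda^{-1})]_\frakm$ I would compare the defining expansions \eqref{eq:defhl} of $h_\lambda$ and $h_{\lambda+k\ell}$: the phase $e^{2\pi i(\lambda,\ell')}$ is unchanged because $(\lambda+k\ell,\ell')=(\lambda,\ell')+k$, so the ratio $h_{\lambda+k\ell}/h_\lambda$ is $\equiv 1 \pmod{q_I^{m_I}}$ at $c_I$ and $\equiv 1 \pmod{q_J^{m_J}}$ at every other cusp, hence its divisor lies in $P_\frakm(X)$. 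This invariance, whose role is precisely to justify including the phase $e^{2\pi i(\lambda,\ell')}$ in the definition of $h_\lambda$ instead of the simpler function $h_{c_I,n_I(\lambda)}$ from \eqref{eq:hn}, then lets me replace $L'_{d,I}/\Gamma_I$ by $L'_{d,I}/I$ in the inner sum, picking up a multiplicity of $n_I(\lambda)$ that exactly cancels the $\tfrac{1}{n_I(\lambda)}$ factor and produces the asserted formula.
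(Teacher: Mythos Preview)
Your proof is correct and follows essentially the same approach as the paper. The paper's argument is terser: it simply writes down the explicit parametrizations $L'_{d,I}=\{\lambda_0+a\ell/N_I:a\in\Z\}$, $L'_{d,I}/\Gamma_I\cong\Z/N_In_I(\lambda_0)\Z$, and $L'_{d,I}/I\cong\Z/N_I\Z$, from which the index-$n_I(\lambda)$ relation is immediate; you instead derive the $\Gamma_I$-action directly from the Eichler transformation formula, and you make explicit two points the paper leaves implicit, namely the decomposition $L'_d/\Gamma=\bigsqcup_{[I]}L'_{d,I}/\Gamma_I$ and the $I$-invariance of the summand via the phase identity $e^{2\pi i(\lambda+k\ell,\ell')}=e^{2\pi i(\lambda,\ell')}$.
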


\begin{proof}
If $L_{d,I}$ is non-empty and if we fix $\lambda_0\in L'_{d,I}$, we have
\begin{align*}
L'_{d,I}&=\{\lambda_0+a\ell/N_I : \; a\in \Z\},\\
L'_{d,I}/\Gamma_{I}&=\{\lambda_0+a\ell/N_I: \; a\in \Z/N_I n_I(\lambda_0)\Z\}, \\
L'_{d,I}/{I}&=\{\lambda_0+a\ell/N_I : \; a\in \Z/N_I \Z\}.
\end{align*}
This implies the assertion.
\end{proof}

\subsection{An abstract modularity theorem}
\label{subsec:mod_thm}
To describe the relations in the generalized Jacobian among the classes $[Z(d)]_\frakm$ we form the generating series
\begin{align}
\label{eq:genser}
A_\frakm(\tau)= \sum_{d\in \frac{1}{N}\Z}
[Z(d)]_\frakm\cdot q^{d}\in S_L^\vee((q))\otimes J^\ii_\frakm(X)_\C.
\end{align}
It is a formal Laurent series in the variable\footnote{Confusion with
  the local parameter $q_I$ at the cusp $c_I$ of $X$
  should not be possible.} $q=e^{2\pi
  i\tau}$, where $\tau\in \H$,  with exponents in $\frac{1}{N}\Z$ and  coefficients in
$S_L^\vee \otimes J^\ii_\frakm(X)_\C $.

\begin{theorem}
\label{thm:mod}
The generating series $A_\frakm(\tau)$ is the $q$-expansion of a weakly holomorphic modular form in $M^!_{3/2}(\omega_L^\vee)\otimes J^\ii_\frakm(X)_\C$.
\end{theorem}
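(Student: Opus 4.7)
The plan is to follow the strategy of Borcherds \cite{Bo2} used in his proof of the Gross--Kohnen--Zagier theorem: first produce sufficiently many linear relations among the classes $[Z(d,\varphi)]_\frakm$ by means of automorphic products, and then upgrade these relations to modularity by invoking Borcherds' duality for vector-valued weakly holomorphic modular forms.

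The first step is the construction of the relations. Given an input $f\in M^!_{-1/2}(\omega_L)$ with integral principal part coefficients $c_f(-n,\mu)$, let $\Psi(f)$ denote its Borcherds lift: a meromorphic modular form on $X$ of weight $c_f(0,0)/2$, defined over $k$, whose divisor on $Y_\Gamma$ equals the Heegner combination $\sum_{n>0,\mu}c_f(-n,\mu)\,Y(n,e_\mu)$. The key new ingredient, going beyond \cite{Bo2}, is to write down the infinite product expansion of $\Psi(f)$ at \emph{every} cusp $c_I$ of $X$. Working via the singular theta correspondence for $(\Sl_2,\Orth(L))$ and choosing a Weyl chamber adapted to the splitting $L_\Q=\Q\ell_I\oplus K_\Q\oplus\Q\ell'_I$, the expansion has the schematic form
\begin{align*}
\Psi(f)(q_I)\;=\;r_I\cdot q_I^{h_I}\prod_{\substack{\lambda\in K'\\ (\lambda,\ell_K)>0}}\bigl(1-e^{2\pi i(\lambda,\ell'_I)}\,q_I^{n_I(\lambda)}\bigr)^{c_f(Q(\lambda),\lambda)},
\end{align*}
with a Weyl-vector exponent $h_I\in\Q_{\geq 0}$ and a local unit constant $r_I\in k^\times$. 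Truncating each factor modulo $q_I^{m_I}$ reproduces precisely the shape of the functions $h_\lambda$ from \eqref{eq:defhl}.

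The second step is to read off the class of $\dv(\Psi(f))$ in the generalized Jacobian. Choosing a rational section of $\calM_{c_f(0,0)/2}$ normalized at each cusp via the canonical trivialization of $\calT$, the construction \eqref{eq:cl} yields the class $[\calM_{c_f(0,0)/2}]_\frakm$, which accounts for the $d=0$ contribution $c_f(0,0)\cdot[\calM_{-1}]_\frakm$. Stripping off at every cusp the leading power $q_I^{h_I}$ and the unit constant $r_I$ (both of which represent classes in $H_{\G_m,\frakm}$ and hence vanish in $J^\ii_\frakm(X)_\C$), and matching each remaining factor of the cusp expansion to a class $[\dv(h_\lambda^{-1})]_\frakm$ via Lemma \ref{lem:zd00}, one obtains in $J^\ii_\frakm(X)_\C\otimes S_L^\vee$ the identity
\begin{align*}
\sum_{d\in\frac{1}{N}\Z}c_f(-d,\cdot)\cdot[Z(d,\cdot)]_\frakm\;=\;0.
\end{align*}
The sum is finite since $c_f(-d,\cdot)=0$ for $d\gg 0$ and $[Z(d,\cdot)]_\frakm=0$ for $d\ll 0$ by our choice of $\frakm$. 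Clearing denominators extends this identity by $\C$-linearity to every $f\in M^!_{-1/2}(\omega_L)$.

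Finally, I invoke Borcherds' duality criterion for $S_L$-valued weakly holomorphic modular forms: a formal Laurent series with coefficients in a $\C$-vector space $V$ and bounded negative support is the $q$-expansion of an element of $M^!_{3/2}(\omega_L^\vee)\otimes V$ if and only if its coefficient pairing with every $f\in M^!_{-1/2}(\omega_L)$ vanishes in $V$. For $V=J^\ii_\frakm(X)_\C$, the identities from the previous step are exactly these vanishings for $A_\frakm(\tau)$, which yields the theorem. The principal obstacle is the first step: producing an explicit infinite product expansion of $\Psi(f)$ at every cusp of $X$ (rather than at a single cusp, as sufficed in \cite{Bo2}), with correct identification of the Weyl vector $h_I$, the local constants $r_I$, and the factor multiplicities, so that truncation modulo $q_I^{m_I}$ recovers exactly the functions $h_\lambda$ used to define $[Z(d,\varphi)]_\frakm$ for $d<0$.
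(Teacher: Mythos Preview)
Your approach is essentially the same as the paper's: Borcherds products supply relations among the $[Z(d)]_\frakm$, and Proposition~\ref{prop:modcrit} upgrades these to modularity. Two corrections are in order. First, the input space in signature $(1,2)$ is $M^!_{1/2}(\omega_L)$, not $M^!_{-1/2}(\omega_L)$: the Serre-dual weight to $3/2$ is $2-3/2=1/2$, and accordingly the Borcherds lift has (classical) weight $c_f(0,0)$ in the paper's conventions, consistent with the class $[\calM_{c_f(0,0)}]_\frakm$. Second, the Weyl-vector powers $q_I^{h_I}$ are not themselves classes in $H_{\G_m,\frakm}$; they contribute to the boundary divisor $B(f)$ supported on the cusps, and the paper absorbs this into the class $[\calM_{c_f(0,0)}]_\frakm$ via a section satisfying \eqref{eq:scond} at all cusps with order zero away from $c_{I_0}$. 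Only the unit constants $r_I$ (your $r_I$, the paper's $R_I$) are genuinely killed by passing to $J^\ii_\frakm(X)$, and this is precisely why the theorem is stated in $J^\ii_\frakm(X)_\C$ rather than $J_\frakm(X)_\C$.
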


To prove this result, we use the following variant of Borcherds' modularity criterion \cite[Theorem 3.1]{Bo2}.
Let  $\rho$ be a finite dimensional representation of $\Mp_2(\Z)$ on a complex vector space $V$ which is trivial on some congruence subgroup.
The stabilizer in $\Mp_2(\Z)$ of the cusp $\infty$ is generated by the elements $T=\left(\kzxz{1}{1}{0}{1},1\right)$ and $Z=\left(\kzxz{-1}{0}{0}{-1},i\right)$.
The hypothesis on $\rho$ implies that some power of $\rho(T)$ is the identity, and therefore all eigenvalues of $\rho(T)$ are roots of unity.
If $g\in M_k^!(\rho)$ is a weakly holomorphic modular form for $\Mp_2(\Z)$ of weight $k\in \frac{1}{2}\Z$ with representation $\rho$, then it has a Fourier expansion
\[
g(\tau)= \sum_{n\in \Q} a(n)\cdot q^n,
\]
where the coefficients $a(n)\in V$ satisfy the conditions
\begin{align}
\label{eq:condt}
\rho(T)a(n)&=e^{2\pi i n}a(n),\\
\label{eq:condz}
\rho(Z)a(n)&= e^{-\pi i k}a(n).
\end{align}
We write $\rho^\vee$ for the representation dual to $\rho$, and denote by $(\cdot,\cdot)$ the natural pairing $V\times V^\vee\to \C$.

\begin{proposition}
\label{prop:modcrit}
A formal Laurent series
\[
g(\tau)=\sum_{n\in \Q} a(n)\cdot q^n \in V((q))
\]
with coefficients $a(n)$ satisfying the conditions \eqref{eq:condt} and \eqref{eq:condz} is the $q$-expansion of a weakly holomorphic modular form in $M_k^!(\rho)$ if and only if
\[
\sum_{n\in \Q} (a(n),c(-n))=0
\]
for all
\[
f(\tau)=\sum_{n\in \Q} c(n)\cdot q^n \in M_{2-k}^!(\rho^\vee).
\]
\end{proposition}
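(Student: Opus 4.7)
My plan is to prove both directions, reducing the converse to Borcherds' original Theorem 3.1 in \cite{Bo2}. For the ``only if'' direction, suppose $g \in M_k^!(\rho)$. Then for any $f \in M_{2-k}^!(\rho^\vee)$, the formal pairing $(g, f)(\tau) = \sum_N \bigl(\sum_{n+m=N} (a(n), c(m))\bigr) q^N$ is a scalar-valued weakly holomorphic modular form of weight $2$ for $\operatorname{SL}_2(\Z)$: the representations $\rho$ and $\rho^\vee$ pair into a trivial character, the weights sum to $2$, and the two $Z$-eigenvalue conditions combine to $e^{-\pi i k} \cdot e^{-\pi i(2-k)} = 1$, so the metaplectic cover plays no role. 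The differential $(g, f)(\tau)\, d\tau$ thus extends to a meromorphic $1$-form on $X(1) \cong \P^1$ with a single possible pole at $\infty$, and the residue theorem forces its $q^0$-coefficient $\sum_n (a(n), c(-n))$ to vanish.

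For the converse, suppose $g$ satisfies \eqref{eq:condt}, \eqref{eq:condz}, and the pairing hypothesis. I would split $g = h + g^\ast$, where $h \in M_k^!(\rho)$ matches the principal part and constant term of $g$ and $g^\ast = \sum_{n \geq 1} b(n)\, q^n$ is a formal power series. The existence of such an $h$ follows from Borcherds' Serre duality statement (Theorem 3.1 of \cite{Bo2}): a Laurent polynomial $\sum_{n \leq 0} a(n) q^n$ is the polar-plus-constant part of some $h \in M_k^!(\rho)$ if and only if it pairs to zero with every $f \in M_{2-k}(\rho^\vee)$, and this is a special case of our hypothesis restricted to holomorphic test forms. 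By linearity and the ``only if'' direction applied to $h$, the tail $g^\ast$ inherits the pairing vanishing against all of $M_{2-k}^!(\rho^\vee)$, and it remains to show $g^\ast \equiv 0$.

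I would establish this by induction on $n \geq 1$: assuming $b(m) = 0$ for $1 \leq m < n$, test $g^\ast$ against any $f \in M_{2-k}^!(\rho^\vee)$ whose principal part begins at $q^{-n}$ with prescribed top coefficient $\varphi \in V^\vee$ in the appropriate $\rho^\vee(T)$- and $\rho^\vee(Z)$-eigenspace. Under the induction hypothesis the pairing collapses to $(b(n), \varphi)$, whose vanishing as $\varphi$ ranges over this eigenspace forces $b(n) = 0$ (since $b(n)$ itself lies in the dual eigenspace of $V$). The main technical obstacle is ensuring that the evaluation map $f \mapsto c_f(-n)$ surjects onto the relevant eigenspace; this reduces once more to Borcherds' theorem applied to $\rho^\vee$, where the finite-dimensional obstruction coming from $M_k(\rho)$ can be absorbed into lower-order principal part coefficients of $f$ (which pair trivially with $g^\ast$ under the induction hypothesis).
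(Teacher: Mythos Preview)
Your ``only if'' direction is fine. In the converse, however, the target $g^\ast \equiv 0$ is simply false, so the induction cannot succeed. Take $k = 12$, $\rho$ trivial, and $g = \Delta$: the principal part and constant term of $g$ both vanish, so $h = 0$ and $g^\ast = \Delta \neq 0$. Your induction breaks at the very first step $n = 1$, because there is no $f \in M^!_{-10}$ with a simple pole---the obstruction space $M_{12}$ is one-dimensional and there are no ``lower-order principal part coefficients'' available to absorb it---so nothing pins down $b(1)$, and indeed $b(1) = 1$.

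What you actually need is only $g^\ast \in M_k^!(\rho)$, not $g^\ast = 0$; since $g^\ast$ is a formal \emph{power} series satisfying the pairing condition, that is precisely Borcherds' Theorem~3.1 in its original form, and with this correction your decomposition $g = h + g^\ast$ does give the result. The paper takes a shorter route that avoids the Serre-duality detour altogether: choose $j$ large enough that $\Delta^j g$ is a formal power series, observe that $\Delta^j g$ inherits the pairing condition against $M^!_{2-k-12j}(\rho^\vee)$ (multiplication by $\Delta^j$ is a bijection of the test spaces), apply Borcherds' theorem literally to obtain $\Delta^j g \in M_{k+12j}(\rho)$, and divide by $\Delta^j$.
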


\begin{proof}
This result is proved in Section 3 of \cite{Bo2} in the special case when $g$ is actually a formal power series. The same proof applies to our slightly more general case, if we replace the vector bundle of modular forms of type $\rho$ by a twist with a power of the line bundle $\calL(\infty)$ corresponding to the cusp at $\infty$.

Alternatively, we may replace the $q$-series $g$ by the $q$-series $g'=\Delta^j g$ for a positive integer $j$ such that $\Delta^j g$ is a power series. Here $\Delta$ is the normalized cusp form of weight $12$. Then one can literally apply \cite[Theorem 3.1]{Bo2} to $g'$ to deduce modularity in $M_{k+12j}(\rho)$ of this power series. Dividing out the power of $\Delta$ again, we obtain the result.
\end{proof}

\begin{proof}[Proof of Theorem \ref{thm:mod}]
According to Proposition \ref{prop:modcrit} with $\rho = \omega_L^\vee$, it suffices to show that
\begin{align}
\label{eq:keyrel}
\sum_{d\in \Q} \left( c(-d), [Z(d)]_\frakm\right)=0\in J^\ii_\frakm(X)_\C
\end{align}
for every
\begin{align}
f(\tau)=\sum_{d\in \Q} c(d)\cdot q^d \in M_{1/2}^!(\omega_L).
\end{align}
Since the space $M_{1/2}^!(\omega_L)$ has a basis of modular forms with integral coefficients \cite{McG}, it suffices to check that for every $f$ with integral coefficients the relation \eqref{eq:keyrel} holds. For $\mu\in L'$ we put
$c(d,\mu)=c(d)(\mu)$.

Let $\Psi(z,f)$ be the Borcherds lift of $f$ as in \cite[Theorem 13.3]{Bo1}. This is a meromorphic modular form on $\calD^+$ for the group $\Gamma$ of weight $c(0,0)$ with some multiplier system of finite order. Its divisor on $X$ is given by
\[
\dv(\Psi(z,f))=\sum_{d>0} \left(c(-d), Z(d)\right)+B(f),
\]
where $B(f)$ is a divisor of degree $c(0,0)/12$ supported at the cusps
of $X$.  Let $I\in \Iso(L)$. To determine the behavior of $\Psi(z,f)$ near the cusp $c_I$, we
identify $\calD^+$ with the upper complex half plane
$\H$ using \eqref{eq:dcoord}. Then $\Psi(w,f)$ has the infinite
product expansion
\begin{align}
\label{eq:BP}
\Psi(w,f)= R_I\cdot q_I^{\rho_I} \prod_{\substack{\lambda\in (L'\cap I^\perp)/I \\ n_I(\lambda) >0}}
\left(1-e^{2\pi i (\lambda,\ell')} q_I^{n_I(\lambda)}\right)^{c(-Q(\lambda),\lambda)},
\end{align}
which converges near the cusp $c_I$, that is, for $w\in \H$ with sufficiently large imaginary part.
Here the product runs over vectors $\lambda$ of negative norm which are associated to $I$, and
$\rho_I\in \Q$ is the Weyl vector at the cusp $c_I$ corresponding to $f$. Moreover, the quantity
$R_I$ is some constant in $k^\times$ of modulus $1$
times
\[
\prod_{\substack{a\in \Z/N_I\Z\\ a\neq 0}}(1-e^{2\pi i a/N_I})^{c(0,a\ell/N_I)/2}.
\]
Hence, the (finite) product
\[
\Psi(w,f)\times R_I^{-1}\prod_{\substack{\lambda\in (L'\cap I^\perp)/I \\ m_I>n_I(\lambda)>0}}
h_\lambda^{-c(-Q(\lambda),\lambda)}
\]
is a meromorphic modular form of weight $c(0,0)$ satisfying the condition \eqref{eq:scond} at $c_I$. There exists a degree zero divisor $D$ supported
on $S$
such that
the finite product
\[
\Psi(w,f)\times u_{D,(R_I)}^{-1}\times\prod_{I\in \Iso(L)/\Gamma}
\prod_{\substack{\lambda\in (L'\cap I^\perp)/I \\ m_I>n_I(\lambda)>0}}
h_\lambda^{-c(-Q(\lambda),\lambda)}
\]
is a meromorphic modular form of weight $c(0,0)$ satisfying
the condition \eqref{eq:scond} at all cusps
and having order $0$ at all cusps different from $c_{I_0}$.
Here
$u_{D,r}\in H_{\G_m,\frakm}$ denotes the
function defined in \eqref{eq:rdu}.

By the choice of the base point $s_0=c_{I_0}$ in \eqref{eq:cl},
the class of the line bundle $\calM_{c(0,0)}$ in $J_\frakm(X)$ is given by
\begin{align}
\label{eq:ls}
[\calM_{c(0,0)}]_\frakm &= [\dv(\Psi(f))-\deg(\calM_{c(0,0)})(c_{I_0})]_\frakm -[\dv(u_{D,(R_I)})]_\frakm \\
\nonumber
&\phantom{=}{}-\sum_{I\in \Iso(L)/\Gamma}\sum_{\substack{\lambda\in (L'\cap I^\perp)/I \\m_I>n_I(\lambda)>0}}
c(-Q(\lambda),\lambda) \cdot [\dv(h_{\lambda})]_\frakm.
\end{align}
Using Lemma \ref{lem:zd00}, we see that
\begin{align*}
&\sum_{I\in \Iso(L)/\Gamma}\sum_{\substack{\lambda\in (L'\cap I^\perp)/I \\m_I>n_I(\lambda)>0}}
c(-Q(\lambda),\lambda) \cdot [\dv(h_{\lambda})]_\frakm=-\sum_{d<0}(c(-d),[Z(d)]_\frakm).
\end{align*}
Inserting this into \eqref{eq:ls}, we obtain the relation
\begin{align*}
-c(0,0)[\calM_{-1}]_\frakm &= \sum_{d>0} \left(c(-d), [Z(d)]_\frakm\right)
+\sum_{d<0} \left(c(-d), [Z(d)]_\frakm\right)-[\dv(u_{D,(R_I)}) - D]_\frakm
\end{align*}
in $J_\frakm(X)_\C$.
This implies \eqref{eq:keyrel} in $J^\ii_\frakm(X)_\C$, concluding the proof of
the theorem.
\end{proof}

\begin{remark}
To be able to describe the generating series in $J_\frakm(X)_\C$ instead of in the quotient $J^\ii_\frakm(X)_\C$, we would have to know the
normalizing factors $R_I$ in \eqref{eq:BP} more precisely.
It would be very interesting to understand these better. Are they roots of unity?
\end{remark}

According to  the Manin-Drinfed theorem, the natural homomorphism
$J_\frakm(X)\to J(X)$ induces
a linear map
\[
J^\ii_\frakm(X)_\C\longrightarrow J(X)_\C.
\]
The classes $[Z(d)]_\frakm$ with $d\leq 0$ are in the kernel. Applying this map  coefficientwise to the generating series $A_\frakm$ in Theorem \ref{thm:mod}, we obtain the Gross-Kohnen-Zagier theorem.

\begin{corollary}[Gross-Kohnen-Zagier]
\label{cor:gkz}
The generating series
\begin{align*}
A_0(\tau)= \sum_{d>0}[Z(d)]_0\cdot q^{d}
\end{align*}
of the classes of the Heegner divisors in the Jacobian $J(X)_\C$ is the $q$-expansion of a cusp form in $S_{3/2}(\omega_L^\vee)\otimes J(X)_\C$.
\end{corollary}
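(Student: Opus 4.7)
The plan is to derive Corollary \ref{cor:gkz} from Theorem \ref{thm:mod} by applying the natural homomorphism $\pi\colon J^\ii_\frakm(X)_\C \to J(X)_\C$ coefficient-by-coefficient to the generating series $A_\frakm(\tau)$. First, I would verify that $\pi$ is well-defined: the surjection $J_\frakm(X)\to J(X)$ sends each generator $[\dv(u_{D,r})-D]_\frakm$ of $H_{\G_m,\frakm}$ to $-[D]$, a degree-zero cuspidal class which is torsion by the Manin-Drinfeld theorem and hence vanishes after tensoring with $\C$. This provides the required factorization, and tensoring with $M^!_{3/2}(\omega_L^\vee)$ yields an induced linear map on the corresponding spaces of weakly holomorphic modular forms.

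Next, I would compute $\pi([Z(d)]_\frakm)$ case-by-case. For $d>0$, the class $[Z(d,\varphi)]_\frakm$ is represented by the same degree-zero divisor $Z(d,\varphi)$ that defines $[Z(d)]_0$, so $\pi([Z(d)]_\frakm)=[Z(d)]_0$. For $d<0$, the defining expression writes $[Z(d,\varphi)]_\frakm$ as a $\C$-linear combination of classes $[\dv(h_\lambda^{-1})]_\frakm$ of principal divisors, whose images in $J(X)$ vanish. The subtle case is $d=0$, where $[Z(0,\varphi)]_\frakm = \varphi(0)\cdot [\calM_{-1}]_\frakm$. Here I would argue that the image of $[\calM_{-1}]_\frakm$ in $J(X)_\C$ is zero by invoking the fact that a suitable positive tensor power of the tautological bundle $\calM_{-1}$, such as $\calM_{12}$, admits a section (the discriminant $\Delta$ in classical language, with analogous constructions in the orthogonal setting) whose divisor is supported on the cusps. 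This makes $[\calM_{-1}]\in\Pic(X)_\Q$ a rational cuspidal class, and consequently $[\calM_{-1}] - \deg(\calM_{-1})[c_{I_0}]$ is a rational degree-zero cuspidal divisor class, which vanishes in $J(X)_\Q$ by Manin-Drinfeld.

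Combining these observations, $\pi(A_\frakm(\tau))=\sum_{d>0}[Z(d)]_0\cdot q^d = A_0(\tau)$ lies in $M_{3/2}^!(\omega_L^\vee)\otimes J(X)_\C$. Since the Fourier expansion contains only strictly positive-index terms, $A_0(\tau)$ is holomorphic with vanishing constant term, and hence a cusp form in $S_{3/2}(\omega_L^\vee)\otimes J(X)_\C$, establishing the corollary. The only nontrivial step in this chain is the $d=0$ analysis: justifying that $\pi([\calM_{-1}]_\frakm)$ vanishes in $J(X)_\C$, which reduces to producing a modular form of some positive weight whose divisor is supported on the cusps and then invoking Manin-Drinfeld; everything else is essentially formal given Theorem \ref{thm:mod}.
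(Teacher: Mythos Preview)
Your proposal is correct and follows essentially the same approach as the paper: the paper simply states (in the paragraph preceding the corollary) that the Manin--Drinfeld theorem makes the map $J^\ii_\frakm(X)_\C\to J(X)_\C$ well-defined, that the classes $[Z(d)]_\frakm$ for $d\leq 0$ lie in the kernel, and that applying this map coefficientwise to $A_\frakm$ yields the result. You carry out exactly this argument, supplying the details the paper leaves implicit---in particular the verification that $[\calM_{-1}]_\frakm$ maps to a torsion cuspidal class and the observation that only positive-index coefficients survive, forcing cuspidality.
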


\section{Traces of singular moduli}
\label{sect:5}

Here we show that every harmonic Maass form of weight zero
with vanishing constant terms
defines a linear functional of the generalized Jacobian
$J_\frakm^\ii(X)_\C$. Applying it to the generating series $A_\frakm$,
one obtains modularity results for traces of CM values of harmonic
Maass forms and weakly holomorphic modular forms as in \cite{Za2},
\cite{BF2}.

Let $H_k^+(\Gamma)$ be the space of harmonic Maass forms of weight $k$
for $\Gamma$ as in \cite[Section~3]{BF1}.
Recall that there is a surjective differential operator
$\xi_k:H_k^+(\Gamma)\to S_{2-k}(\Gamma)$ to cusp forms of `dual'
weight $2-k$.

For the rest of this section we fix a non-zero $F\in H_{0}^{+}(\Gamma)$. We denote the holomorphic part of the
Fourier expansion of $F$ at the cusp $c_I$ corresponding to $I\in \Iso(L)$ by
\begin{align}
\label{eq:Fexp}
F_I^+=\sum_{j\in \Z} c_{F,I}^+(j) \cdot q_I^j.
\end{align}
We define the order of $F$ at the cusp $c_I$ by
\[
\ord_{c_I}(F)= \min\{ j\in \Z: \; c_{F,I}^+(j)\neq 0\}.
\]

\begin{proposition}
\label{prop:psi}
\label{rem:psi}
Assume that for all $I\in \Iso(L)$ we have $\ord_{c_I}(F)>-m_I$ and $c_{F,I}^+(0)=0$.
\begin{enumerate}
\item[(i)] There is a linear map
\begin{align*}
\tr_F:J_\frakm(X)\longrightarrow\C
\end{align*}
defined by
\begin{align*}
[D]_\frakm\mapsto \tr_F(D):=\sum_{\substack{a\in \supp(D)\setminus S}} n_a \cdot F(a)
\end{align*}
for divisors $D=\sum_a n_a \cdot(a)$ in $\Div^0(X)$.
\item[(ii)]
The map $\tr_F$ vanishes on $H_{\G_m,\frakm}$ and factors through $J_\frakm^\ii(X)$.
\end{enumerate}
\end{proposition}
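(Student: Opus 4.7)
Part (i) reduces to showing that the formula, viewed as a $\C$-linear functional on $\Div^0(X)$, vanishes on $P_\frakm(X)$. For part (ii) one must further show that it kills the generators $[\dv(u_{D,r})-D]_\frakm$ of $H_{\G_m,\frakm}$. Both collapse to a single claim: for any $f\in k(X)^\times$ whose local expansion at each cusp $c_I$ takes the form $f=r_I q_I^{a_I}(1+O(q_I^{m_I}))$ with $r_I\in k^\times$, one has $\sum_{a\in Y_\Gamma}\ord_a(f)\,F(a)=0$. The case $r_I=1$ is exactly the $P_\frakm$-condition, while general $r_I$ handles $u_{D,r}$, since a constant in $w$ drops out of $df/f$ and the cusp-supported divisor $D$ contributes nothing to $\tr_F$.

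The plan is to apply Stokes' theorem to the smooth $1$-form $F\,\omega_f$, where $\omega_f:=df/f$, on the truncated domain $\calF_T^\circ$ obtained from a fundamental domain $\calF$ by cutting off at height $T$ at each cusp and removing small disks $U_\epsilon(a)$ around the zeros and poles of $f$ in $Y_\Gamma$. Since $d\omega_f=0$ off the singular locus, this yields
\[
\int_{\calF_T^\circ} dF\wedge\omega_f \;=\; \int_{\partial\calF_T^\circ} F\,\omega_f.
\]
The sides of $\calF$ cancel by $\Gamma$-invariance; the inward circles at interior zeros/poles contribute $-2\pi i\,\tr_F(\dv f)$ by the residue theorem as $\epsilon\to 0$. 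At each cusp $c_I$ one has $\omega_f=(2\pi i\,a_I+O(q_I^{m_I}))\,dw$; the horocycle integral then pairs Fourier coefficients of $F$ at $c_I$ against this. The $0$-th Fourier coefficient of $F_I^+$ is $c_{F,I}^+(0)=0$ by hypothesis, and the $0$-th Fourier coefficient of $F_I^-$ vanishes because $\xi_0(F)\in S_2(\Gamma)$ is cuspidal and thus has no constant term at any cusp. The $O(q_I^{m_I})$-correction selects coefficients $c_{F,I}^+(n)$ with $n\le-m_I$, all zero by $\ord_{c_I}(F)>-m_I$, together with exponentially decaying $F_I^-$-terms. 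Hence the horocycle integrals vanish as $T\to\infty$, leaving $-2\pi i\,\tr_F(\dv f)=\int_\calF dF\wedge\omega_f$.

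To finish, one shows $\int_\calF dF\wedge\omega_f=0$. Since $\omega_f$ is of type $(1,0)$, $dF\wedge\omega_f=\bar\partial F\wedge\omega_f$; writing $g:=\xi_0(F)\in S_2(\Gamma)$ and $\bar\partial F=\tfrac{i}{2}\,\overline g\,d\bar w$, this integral equals, up to a nonzero constant, $\int_X\omega_f\wedge\overline{g\,dw}$. Since $d(\overline{g\,dw})=0$ (as $g\,dw$ is holomorphic) and $d\bar f/\bar f\wedge\overline{g\,dw}=0$ (both factors of type $(0,1)$), one has the pointwise identity $d(\log|f|^2\cdot\overline{g\,dw})=\omega_f\wedge\overline{g\,dw}$ on $X$ minus the zeros and poles of $f$. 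A second Stokes argument on $X$ minus small disks around every zero and pole of $f$, interior or cuspidal, reduces the integral to boundary terms: interior contributions are $O(\epsilon^2\log\epsilon)\to 0$ by direct estimate, and cusp contributions are controlled by writing $\log|f|^2=2a_I\log\epsilon+O(\epsilon^{m_I})$ on the circle $|q_I|=\epsilon$ and observing that Fourier orthogonality against the Taylor expansion of $\overline{g\,dw}$, which starts at the $|q_I|^1$ term by cuspidality of $g$, kills the leading piece, while the $O(\epsilon^{m_I})$-corrections contribute terms of order $\epsilon^{m_I+1}\to 0$. This gives $\int_X\omega_f\wedge\overline{g\,dw}=0$, and combining with the primary Stokes identity yields $\tr_F(\dv f)=0$.

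The main obstacle is the second Stokes argument, where one must track the cusp-boundary contributions carefully and confirm that cuspidality of $g=\xi_0(F)$, together with the $P_\frakm$-condition on $f$, precisely accounts for their vanishing; the horocycle integral in the primary Stokes argument uses the same two hypotheses in dual form, with the assumptions on the principal parts of $F$ playing a symmetric role.
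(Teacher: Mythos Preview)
Your argument is correct and follows essentially the same route as the paper: apply Stokes' theorem to $F\,df/f$ on the curve with small neighborhoods of $\dv(f)\cup S$ removed, then integrate by parts a second time to convert the area integral of $\bar\partial F\wedge df/f$ into the boundary integral of $(\bar\partial F)\log|f|^2$, which vanishes in the limit because $\bar\partial F=\overline{\xi_0(F)}\,d\bar w$ is smooth on all of $X$ (cuspidality of $\xi_0(F)$) while $\log|f|^2$ has only logarithmic singularities. The paper organizes this slightly more compactly by working throughout on a single domain $X_\eps$ (disks removed around both interior and cuspidal points) rather than first truncating at height $T$ and later switching to disks, but the analytic content is identical; your more explicit tracking of the horocycle and cusp-boundary contributions simply unpacks what the paper summarizes in one sentence.
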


\begin{proof}
(i) We have to show that $\tr_F(D)=0$, for every divisor $D=\dv(g)\in P_\frakm(X)$ given by a rational function $g\in k(X)^\times$ satisfying
\[
q_I^{-\ord_{c_I}(g)}g=1+O(q_I^{m_I})
\]
at every cusp $c_I$. The expansion of the logarithmic derivative of $g$ with respect to the local parameter $q_I$ at $c_I$
is of the form
\[
\frac{d g}{g}
= \ord_{c_I}(g)q_I^{-1}  +O(q_I^{m_I-1}).
\]

If $F$ is weakly holomorphic, then $\eta:=F\frac{dg}{g}$ is a meromorphic $1$-form on $X$. Hence, by the residue theorem, the sum of the residues of $\eta$ vanishes, and we have
\[
\sum_{\substack{a\in X\setminus S}} \res_a(\eta) = -\sum_{a\in S}\res_{a}(\eta).
\]
The left hand side of this equality is given by $\tr_F(D)$, while the right hand side
satisfies
\begin{align*}
\sum_{a\in S}\res_{a}(\eta) &= \sum_{I\in \Iso(L)/\Gamma}\res_{c_I}(\eta)\\
&=
\sum_{I\in \Iso(L)/\Gamma}\res_{q_I=0}\left(\left(\ord_{c_I}(g)q_I^{-1}+O(q_I^{m_I-1})\right) \sum_{j>-m_I} c_{F,I}^+(j)\cdot q_I^j\right)=0.
\end{align*}
Here we have also used the fact that $c_{F,I}^+(0)=0$ for all $I$.

To prove the assertion for general $F\in H_{0}^{+}(\Gamma)$, we let $X_\eps$ be the manifold with boundary obtained from $X$ by cutting out small oriented discs of radius $\eps$ around the points in $\supp(\dv(g))\cup S$. Then for the $1$-form $\eta:=F\frac{dg}{g}$ it is still true that
\[
\lim_{\eps\to 0} \int_{\partial X_\eps} \eta= 0.
\]
Indeed, we have
\begin{align*}
\int_{\partial X_\eps} \eta &= \int_{\partial X_\eps} F\cdot \partial \log|g|^2\\
&= \int_{ X_\eps} d(F\cdot \partial \log|g|^2).
\end{align*}
Since $\log|g|^2$ and $F$ are harmonic functions on $X_\eps$, we find that
\begin{align*}
\int_{\partial X_\eps} \eta
&= \int_{ X_\eps} (\bar\partial F)\wedge(\partial \log|g|^2)\\
&= -\int_{ X_\eps} \partial\left((\bar\partial F) \log|g|^2\right)\\
&= -\int_{ \partial X_\eps} (\bar\partial F) \log|g|^2.
\end{align*}
In the latter integral, the differential $\bar\partial F=\overline{\xi_0(F)}d\bar z$ is antiholomorphic (hence smooth) on all of $X$. Since $\log|g|^2$ has only  logarithmic singularities, the integral vanishes in the limit $\eps\to 0$.

On the other hand, a local computation shows that
\begin{align}
\label{eq:h1}
\lim_{\eps\to 0} \int_{\partial X_\eps} \eta = \tr_F(D) + \sum_{I\in \Iso(L)/\Gamma}\res_{c_I} (F_I^+\cdot \frac{dg}{g}).
\end{align}
The vanishing of the second summand on the right hand side follows as before, proving that $\tr_F(D)=0$ again.

(ii) Let $u_{D,r}$ be as in \eqref{eq:rdu}. The same argument shows that $\tr_F(\dv(u_{D, r}))$ vanishes and $\tr_F$ factors through $J_\frakm^\ii(X)$.
%
\end{proof}

\begin{theorem}
\label{thm:tr}
Assume that for all $I\in \Iso(L)$ we have $\ord_{c_I}(F)>-m_I$ and $c_{F,I}^+(0)=0$.
Then $\tr_F(A_\frakm)$ is a weakly holomorphic modular form in $M_{3/2}^!(\omega_L^\vee)$, and
\begin{align*}
\tr_F(A_\frakm)(\varphi)&
=\sum_{d<0} \tr_F([Z(d,\varphi)]_\frakm)\cdot q^d
+\tr_F([\calM_{-1}]_\frakm)\varphi(0)
+\sum_{d>0} F\big(Y(d,\varphi)\big)\cdot q^d.
\end{align*}
Moreover, for $d<0$ the quantity $\tr_F([Z(d,\varphi)]_\frakm)$ is given by the finite sum
\[
\tr_F([Z(d,\varphi)]_\frakm)=
-
\frac{1}{2}\sum_{I\in \Iso(L)/\Gamma}\sum_{\substack{\lambda\in L_{d,I}'/\Gamma_{I}}}
\big(\varphi(\lambda)+\varphi(-\lambda)\big)
\cdot \sum_{j\geq 1}e^{2\pi i (\lambda,\ell_I')j}c_{F,I}^+\left(-n_I(\lambda) j\right).
\]
\end{theorem}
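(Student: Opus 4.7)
The plan is to combine Theorem~\ref{thm:mod} with the linear functional from Proposition~\ref{prop:psi}. Since Proposition~\ref{prop:psi}(ii) says $\tr_F$ descends to a $\C$-linear map on $J^\ii_\frakm(X)_\C$, applying it coefficientwise to $A_\frakm \in M^!_{3/2}(\omega_L^\vee)\otimes J^\ii_\frakm(X)_\C$ immediately produces a weakly holomorphic modular form $\tr_F(A_\frakm)\in M^!_{3/2}(\omega_L^\vee)$, which gives the first assertion.

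For the explicit coefficientwise formula I handle the three ranges of $d$ separately. When $d>0$, the divisor $Z(d,\varphi) = Y(d,\varphi)-\deg(Y(d,\varphi))\cdot(c_{I_0})$ has its support outside $S$ equal to $\supp(Y(d,\varphi))$, so the definition of $\tr_F$ gives $\tr_F([Z(d,\varphi)]_\frakm) = F(Y(d,\varphi))$ directly. When $d=0$, the identity $[Z(0,\varphi)]_\frakm = \varphi(0)\cdot[\calM_{-1}]_\frakm$ yields the claimed term, leaving the class $\tr_F([\calM_{-1}]_\frakm)$ as the constant coefficient.

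The main work is the case $d<0$. Starting from
\[
[Z(d,\varphi)]_\frakm = \sum_{\lambda \in L_d'/\Gamma} \frac{\varphi(\lambda)+\varphi(-\lambda)}{2 n_I(\lambda)} \cdot [\dv(h_\lambda^{-1})]_\frakm,
\]
I need to evaluate $\tr_F(\dv(h_\lambda^{-1}))$ for each $\lambda$ with associated isotropic line $I$. The residue identity from the proof of Proposition~\ref{prop:psi} (applied to $g=h_\lambda^{-1}$, using $dg/g = -dh_\lambda/h_\lambda$) gives $\tr_F(\dv(h_\lambda^{-1})) = \sum_{J\in\Iso(L)/\Gamma}\res_{c_J}\bigl(F_J^+\cdot dh_\lambda/h_\lambda\bigr)$. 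At cusps $c_J \neq c_I$, the prescription $h_\lambda = 1+O(q_J^{m_J})$ forces $dh_\lambda/h_\lambda$ to have order $\geq m_J-1$ at $c_J$; pairing against $F_J^+$ (of order $>-m_J$) kills the residue. At $c_I$, I expand $\log h_\lambda$ modulo $O(q_I^{m_I})$ using the Taylor series of $\log(1-u)$ applied to $h_\lambda=1-e^{2\pi i(\lambda,\ell_I')}q_I^{n_I(\lambda)}+O(q_I^{m_I})$, differentiate, and extract the coefficient of $q_I^{-1}dq_I$ against $F_I^+$; this gives $\res_{c_I}(F_I^+ \cdot dh_\lambda/h_\lambda) = -n_I(\lambda)\sum_{j\geq 1}e^{2\pi i j(\lambda,\ell_I')}c_{F,I}^+(-n_I(\lambda)j)$, a finite sum since $c_{F,I}^+$ vanishes below $-m_I$. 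Substituting back, regrouping $L_d'/\Gamma$ as $\bigsqcup_I L_{d,I}'/\Gamma_I$, and using $n_I(\lambda)>0$ for $\lambda\sim I$ to cancel the $1/n_I(\lambda)$ yields the stated formula.

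The one piece requiring care is the logarithmic expansion: one must verify that the class of $\log h_\lambda$ modulo $O(q_I^{m_I})$ is well-defined from $h_\lambda$ modulo $O(q_I^{m_I})$, so that the computation does not depend on the auxiliary choice of $h_\lambda$, and that the precision window in which $F_I^+$ has nonzero coefficients matches exactly the precision to which $h_\lambda$ is specified at every cusp. Both checks are essentially bookkeeping but they are the only substantive input beyond the residue identity already established in Proposition~\ref{prop:psi}.
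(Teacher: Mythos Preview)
Your proposal is correct and follows essentially the same argument as the paper: modularity comes from applying the functional of Proposition~\ref{prop:psi} to Theorem~\ref{thm:mod}, the cases $d>0$ and $d=0$ are immediate from the definitions, and for $d<0$ you invoke the residue identity \eqref{eq:h1} and expand $dh_\lambda/h_\lambda$ at $c_I$ via the geometric series (equivalently, your $\log(1-u)$ expansion) to extract the coefficient $n_I(\lambda)\sum_{j\geq 1}e^{2\pi i(\lambda,\ell_I')j}c_{F,I}^+(-n_I(\lambda)j)$, exactly as in \eqref{eq:fhn}. The regrouping $L_d'/\Gamma = \bigsqcup_I L_{d,I}'/\Gamma_I$ is the same step the paper performs in \eqref{eq:fhn0}.
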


\begin{proof}
The modularity of $\tr_F(A_\frakm)$ is a direct consequence of Theorem \ref{thm:mod} and Proposition~\ref{prop:psi}.

We now compute the $q$-expansion.
For $d>0$ and $\varphi\in S_L$ we have by definition of the map $\tr_F$ that
\[
\tr_F([Z(d,\varphi)]_\frakm) = F(Y(d,\varphi)).
\]
If $d<0$ and $d\in-2\disc(L)(\Q^\times)^2$, we obtain
by the definition of the class $[Z(d)]_\frakm$ that
\begin{align}
\label{eq:fhn0}
\tr_F([Z(d,\varphi)]_\frakm)&=
\sum_{\substack{\lambda\in L_d'/\Gamma}}
\frac{1}{2n_I(\lambda)}(\varphi(\lambda)+\varphi(-\lambda)\big)
\cdot \tr_F\left(\dv \left(h^{-1}_{\lambda}\right) \right)
\\
\nonumber
&=-\sum_{I\in \Iso(L)/\Gamma}\sum_{\substack{\lambda\in L_{d,I}'/\Gamma_{I}}}
\frac{1}{2n_I(\lambda)}(\varphi(\lambda)+\varphi(-\lambda)\big)
\cdot F\left(\dv \left(h_{\lambda}\right) \right).
\end{align}
Arguing as in the proof of Proposition \ref{prop:psi}, in particular \eqref{eq:h1}, we
find  for $\lambda\in L'_{d,I}$ that
\begin{align}
\label{eq:fhn}
F(\dv(h_\lambda))
&=-\sum_{J\in \Iso(L)/\Gamma}\res_{c_J} (F_J^+\cdot \frac{dh_\lambda}{h_\lambda})\\
\nonumber
&= \res_{c_I}\left( F_I^+\cdot\frac{n_I(\lambda) \cdot e^{2\pi i (\lambda,\ell_I')} q_I^{n_I(\lambda)-1}+O(q_I^{m_I-1})}{1-e^{2\pi i (\lambda,\ell_I')}q_I^{n_I(\lambda)}}\right)\\
\nonumber
&=n_I(\lambda)\sum_{j\geq 1}e^{2\pi i (\lambda,\ell_I')j}c_{F,I}^+(-n_I(\lambda)j).
\end{align}
Inserting this into
\eqref{eq:fhn0}, we obtain the assertion.
\end{proof}

\begin{remark}
\label{rem:ct1}
The constant term $\tr_F([Z(0,\varphi)]_\frakm)$ can also be computed explicitly, see Propsition \ref{prop:ex} for an example.
\end{remark}

\subsection{An example}
\label{sect:5.1}

Here we consider as an example the modular curve $X_0(M)$
for a squarefree $M\in \Z_{>0}$.
Let $L$ be the lattice
\begin{align}
\label{eq:exl}
L=\left\{\zxz{b}{a/M}{c}{-b}:\; a,b,c\in \Z\right\}
\end{align}
with the quadratic form $Q(X)=M\det(X)$. Then $L'/L\cong \Z/2M\Z$ and $\SO^+(L)$ is isomorphic to the extension $\Gamma_0^*(M)$ of $\Gamma_0(M)$ by the Atkin-Lehner involutions. The discriminant kernel subgroup $\Gamma$ is isomorphic to $\Gamma_0(M)$, and
the modular curve $X_\Gamma$ is isomorphic to $X_0(M)$ with the cusp associated to the isotropic line  $I_0=\Z\kzxz{0}{0}{1}{0}$ corresponding to $\infty$, see e.g.~\cite[Section 2.4]{BO}.

The group $\Gamma_0^*(M)$ acts transitively on $\Iso(L)$, and the orbits are represented by the lines $I_D=W_D .I_0$ for the positive divisors $D\mid M$. Here $W_D\in \operatorname{PGL}^+_2(\Q)$ denotes the Atkin-Lehner involution with index $D$.
In particular, the set $S$ of cusps of $X_0(M)$ is in bijection with the set of positive divisors of $M$. If $I\in \Iso(L)$, we write $D_I$ for the unique positive divisor of $M$ such that $I$ is equivalent to $W_{D_I} .I_0$ under $\Gamma$.
Let $F\in H_0^+(\Gamma)$ be a harmonic Maass form.
The expansion of $F$ at the cusp $I_D$ as in \eqref{eq:Fexp} is given by the Fourier expansion  of $F\mid W_D$.

\begin{proposition}
\label{prop:ex}
Assume that for all $I\in \Iso(L)$ we have $\ord_{c_I}(F)>-m_I$ and $c_{F,I}^+(0)=0$.
The constant term of the generating series $\tr_F(A_\frakm)$
is given by
\[
\tr_F([\calM_{-1}]_\frakm)=2\sum_{D\mid M} \sum_{j\geq 1} c_{F,I_D}^+(-j)\cdot D\cdot \sigma_1(j/D).
\]
\end{proposition}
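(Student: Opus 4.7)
My plan is to compute $\tr_F([\calM_{-1}]_\frakm)$ by constructing an explicit rational section of a positive power of $\calM_{-1}$, generalizing the approach sketched in the commented remark for $X(1)$. Since $\calM_{12}\cong \calM_{-1}^{\otimes -12}$ as line bundles on $X_0(M)$ (both being powers of the tautological bundle $\calT$), we have the identity
\[
[\calM_{-1}]_\frakm = -\tfrac{1}{12}[\calM_{12}]_\frakm \quad\text{in}\quad J_\frakm^\ii(X_0(M))_\Q,
\]
so it suffices to compute $\tr_F([\calM_{12}]_\frakm)$ and divide by $-12$.

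For each $D\mid M$, the cusp $c_{I_D}=W_D.c_{I_0}$ corresponds to a cusp of $\Gamma_0(M)$ of width $D$ (since $M$ is squarefree). The covering $X_0(M)\to X(1)$ is ramified at $c_{I_D}$ with index equal to this width, so pulling back the standard expansion $\Delta=q\prod_{n\geq 1}(1-q^n)^{24}$ on $X(1)$ yields the local expansion
\[
\Delta = q_{I_D}^{D}\prod_{n\geq 1}\bigl(1-q_{I_D}^{nD}\bigr)^{24}
\]
at $c_{I_D}$ in the (canonical) local parameter $q_{I_D}$. For each pair $(D,n)$ with $D\mid M$ and $0<nD<m_{I_D}$, I would use the approximation theorem to choose a function $h_{D,n}\in k(X_0(M))^\times$ as in \eqref{eq:defhl} attached to a vector $\lambda_{D,n}\in L'\cap I_D^\perp\cap(\ell'_{I_D})^\perp$ with $n_{I_D}(\lambda_{D,n})=nD$; such vectors exist as appropriate fractional multiples of the generator of the rank-one lattice $K_{I_D}$. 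Since $(\lambda_{D,n},\ell'_{I_D})=0$, we have $h_{D,n}\equiv 1-q_{I_D}^{nD}\pmod{q_{I_D}^{m_{I_D}}}$ at $c_{I_D}$ and $h_{D,n}\equiv 1 \pmod{q_{I_J}^{m_{I_J}}}$ at every other cusp $c_{I_J}$.

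The modified section $\tilde s := \Delta\cdot\prod_{D\mid M}\prod_{n\geq 1,\,nD<m_{I_D}} h_{D,n}^{-24}$ then satisfies condition \eqref{eq:scond} at every cusp with $a_{c_{I_D}}=D$, so using $\deg\calM_{12}=\sigma_1(M)=\sum_{D\mid M}D$, formula \eqref{eq:cl} with $s_0=c_{I_0}$ gives
\[
[\calM_{12}]_\frakm = \Big[\sum_{D\mid M}D\bigl((c_{I_D})-(c_{I_0})\bigr)\Big]_\frakm - 24\sum_{D,n}\bigl[\dv(h_{D,n})\bigr]_\frakm.
\]
Applying $\tr_F$, the first summand vanishes because its representative is supported on $S$. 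For the second, the residue computation \eqref{eq:fhn} from the proof of Theorem~\ref{thm:tr} gives $F(\dv h_{D,n})=nD\sum_{j\geq 1}c_{F,I_D}^+(-nDj)$. Reindexing via $\ell=nDj$, the double sum over $(n,j)$ for each fixed $D$ collapses to $\sum_{\ell\geq 1}c_{F,I_D}^+(-\ell)\cdot D\,\sigma_1(\ell/D)$ (with the convention $\sigma_1(\ell/D)=0$ when $D\nmid\ell$; finiteness follows from $\ord_{c_{I_D}}(F)>-m_{I_D}$). Dividing by $-12$ yields the claimed formula. The main technical point will be to verify carefully that the lattice-theoretic local parameter $q_{I_D}$ coincides with the canonical analytic parameter at the cusp $c_{I_D}$ of $X_0(M)$, so that the above expansion of $\Delta$ and the identification of $n_{I_D}$-contents hold as stated.
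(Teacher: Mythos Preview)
Your approach is essentially the same as the paper's: compute $[\calM_{12}]_\frakm$ via the section $\Delta$ corrected by the functions $h_\lambda$, apply $\tr_F$, and use the residue formula \eqref{eq:fhn}. The one discrepancy is your local expansion of $\Delta$ at $c_{I_D}$: the paper computes $\Delta\mid W_D = D^{-6}\Delta(D\tau)$, so there is an extra leading constant $D^{-6}$ that your ramification/pullback heuristic misses; consequently your $\tilde s$ does not literally satisfy \eqref{eq:scond}, and one must multiply by a function $u_{0,r}$ with $r=(D^{6})_{D\mid M}$ as the paper does. This is harmless for the conclusion since $\tr_F$ vanishes on $H_{\G_m,\frakm}$ by Proposition~\ref{prop:psi}(ii), and it is precisely the ``main technical point'' you already flagged for verification.
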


\begin{remark}
As shown in \cite[Remark 4.9]{BF2}, the right hand side above is also equal to $- \frac{1}{4\pi} \int_{\Gamma_0(M) \backslash \H }^{\mathrm{reg}} 
F \,d\mu$. 
The proposition gives a geometric interpretation of this regularized integral.
\end{remark}

\begin{proof}[Proof of Proposition \ref{prop:ex}]
We use the notation of the proof of Theorem \ref{thm:tr}.
By linearity it suffices to compute the class of the line bundle  $\calM_{12}$.
Since $X_{\Gamma}\cong X_0(M)$, a section of this line bundle is given by
the usual discriminant function $\Delta=q\prod_{n\geq 1}(1-q^n)^{24}$.
To compute the class of $\calM_{12}$ in the generalized Jacobian, we have to modify this section by multiplying with rational functions such that the local conditions \eqref{eq:scond} at the cusps are satisfied.
It is easily checked that
\[
\Delta\mid W_D= D^{-6}\Delta(D\tau) = D^{-6}q^{D}\prod_{n\geq 1}(1-q^{Dn})^{24}.
\]
This implies that the section
\[
s=\Delta\cdot
\prod_{I\in \Iso(L)/\Gamma}
\prod_{\substack{\lambda\in (L'\cap I^\perp)/I \\m_I>n_I(\lambda)>0}} h_{D_I\lambda}^{-24},
\]
has the expansion
\[
s=D^{-6} q_{I_D}^D \cdot \left(1+ O(q_{I_D}^{D m_I})\right)
\]
at the cusp $I_D$. For the $|S|$-tuple $r=(D^{6})_{D\mid M}$, and the function $u_{0,r}\in \Q(X_0(M))^\times$, the section $s\cdot u_{0,r}$ of $\calM_{12}$ satisfies the local conditions \eqref{eq:scond} at all cusps.
Therefore, in view of \eqref{eq:cl} and Proposition \ref{prop:psi} (ii), we have
\begin{align*}
\tr_F([\calM_{12}]_\frakm)&=\tr_F([\dv(s \cdot u_{0,r})-\deg(\calM_{12})\cdot(c_{I_0})]_\frakm)\\
&=-24\sum_{I\in \Iso(L)/\Gamma}\sum_{\substack{\lambda\in (L'\cap I^\perp)/I \\m_I>n_I(\lambda)>0}} F(\dv(h_{D_I\lambda})).
 \end{align*}
Using the formula \eqref{eq:fhn}, we get
\begin{align*}
\tr_F([\calM_{12}]_\frakm)
&=-24\sum_{I\in \Iso(L)/\Gamma}\sum_{n=1}^{m_I-1}D_I n\sum_{j\geq 1}c_{F,I}^+(-D_I nj)\\
&=-24\sum_{D\mid M}  \sum_{j\geq 1} c_{F,I_D}^+(-j)\cdot D\cdot \sigma_1(j/D).
 \end{align*}
This concludes the proof of the proposition.
\end{proof}

We now explain how to obtain a scalar valued generating series from $\tr_F(A_\frakm)$.
By means of the canonical pairing $(S_L,S_L^\vee)\to \C$, we define a map
\[
S_L^\vee\to \C,\quad u\mapsto (\chi_1,u)
\]
given by the pairing with the constant function $\chi_1$ with value $1$.
It induces a map from $S_L^\vee$-valued  to scalar valued modular forms,
\[
M^!_{3/2}(\omega_L^\vee)\longrightarrow M^!_{3/2}(\Gamma_0(4M)), \quad
f(\tau)\mapsto f^{\operatorname{scal}}(\tau) :=f(\chi_1)(4M\tau),
\]
see \cite[\S 5]{EZ}. The image lies in the Kohnen plus-space.
Applying this map to the generating series $A_\frakm$ of Theorem \ref{thm:mod}, we obtain a scalar valued generating series which has level $4M$. In particular, this implies
Theorem~\ref{thm:intromod} of the introduction.
If we apply this map to Theorem~\ref{thm:tr} and  use Proposition \ref{prop:ex}, we obtain:

\begin{theorem}
\label{thm:trex}
Let $L$ be as in \eqref{eq:exl}.
Assume that for all $I\in \Iso(L)$ we have $\ord_{c_I}(F)>-m_I$ and $c_{F,I}^+(0)=0$.
Then $\tr_F(A_\frakm^{\operatorname{scal}})\in M_{3/2}^!(\Gamma_0(4M))$, and
\begin{align*}
\tr_F(A_\frakm^{\operatorname{scal}})&=-
\sum_{D\mid M}\sum_{b\geq 1} \sum_{n\geq 1} c^+_{F,I_D}(-bn)\cdot  b \cdot q^{-b^2}
+2\sum_{D\mid M} \sum_{n\geq 1} c^+_{F,I_D}(-n)\cdot  D\cdot \sigma_1(n/D)\\
&\phantom{=}{}
+\sum_{\substack{d\in \Z_{>0}}} F\big(Y(d/4M,\chi_1)\big)\cdot q^d.
\end{align*}
\end{theorem}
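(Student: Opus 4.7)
The strategy is to specialize Theorem \ref{thm:tr} to $\varphi = \chi_1$ and apply the scalar reduction $f \mapsto f^{\operatorname{scal}}$ termwise; modularity in $M_{3/2}^!(\Gamma_0(4M))$ is then immediate from the composition of Theorem \ref{thm:tr} with the map $M_{3/2}^!(\omega_L^\vee) \to M_{3/2}^!(\Gamma_0(4M))$. Under the rescaling $\tau \mapsto 4M\tau$ implicit in the scalar map, an exponent $d' \in \frac{1}{4M}\Z$ in Theorem \ref{thm:tr} becomes the integer exponent $d = 4Md'$, so I would simply translate each of the three sums one at a time. The positive-index piece becomes $\sum_{d>0} F(Y(d/(4M), \chi_1)) q^d$ directly, producing the third summand. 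For $d=0$, the value $\tr_F([Z(0, \chi_1)]_\frakm) = \chi_1(0) \cdot \tr_F([\calM_{-1}]_\frakm) = \tr_F([\calM_{-1}]_\frakm)$ is computed by Proposition \ref{prop:ex}, giving the middle summand.

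The content of the proof lies in the negative-index coefficient. For $b \in \Z_{>0}$, the coefficient of $q^{-b^2}$ in $\tr_F(A_\frakm^{\operatorname{scal}})$ is $\tr_F([Z(-b^2/(4M), \chi_1)]_\frakm)$. By Theorem \ref{thm:tr}, using $\chi_1(\lambda) + \chi_1(-\lambda) = 2$, this equals
\[
-\sum_{D \mid M} \sum_{\lambda \in L'_{-b^2/(4M), I_D}/\Gamma_{I_D}} \sum_{j \geq 1} e^{2\pi i (\lambda, \ell'_{I_D}) j} c^+_{F, I_D}\bigl(-n_{I_D}(\lambda) j\bigr).
\]
For each divisor $D$ of $M$, I would transport the cusp $I_D$ to $I_0$ via the Atkin-Lehner involution $W_D$, where $L$ and $L'$ are explicit from \eqref{eq:exl}. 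Parameterizing $L'_{-b^2/(4M), I_D}/\Gamma_{I_D}$ as in the proof of Lemma \ref{lem:zd00} via a class $a \in \Z/(N_{I_D} n_{I_D}(\lambda_0))\Z$ with $\lambda = \lambda_0 + a\ell/N_{I_D}$, the phases factor as $e^{2\pi i j (\lambda_0, \ell'_{I_D})} \cdot e^{2\pi i aj/N_{I_D}}$, and the sum over $a$ is a geometric sum of roots of unity that selects precisely those $j$ for which $n_{I_D}(\lambda) j$ is a positive multiple of $b$. After reindexing, the contribution from the cusp $I_D$ collapses to $b \sum_{n \geq 1} c^+_{F, I_D}(-bn)$, and summing over $D$ yields the first summand of the theorem.

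The principal obstacle is making the orbit-phase collapse work uniformly: I need explicit values of $N_{I_D}$, $n_{I_D}(\lambda_0)$, and $(\lambda_0, \ell'_{I_D})$ for the lattice \eqref{eq:exl} at each cusp $I_D$, obtained by direct matrix computation after pulling back through $W_D$. Careful bookkeeping (verifying in particular that the orbit count times $n_{I_D}(\lambda_0)$ contributes the factor $b$ independently of $D$, and that the non-trivial phase $(\lambda_0, \ell'_{I_D})$ disappears after averaging over $a$) produces the claimed form; the overall minus sign is inherited directly from Theorem \ref{thm:tr}. All remaining ingredients---modularity, the positive-index sum, and the constant term---are formal consequences of the results already established in Sections 4 and 5, so the verification reduces to this one explicit geometric-series calculation at each cusp.
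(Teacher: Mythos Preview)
Your approach is essentially the same as the paper's: the paper's proof is a single sentence (``apply Theorem~\ref{thm:tr} and Proposition~\ref{prop:ex}''), and you have correctly identified the three pieces to be computed and that only the negative-index term requires work.

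One simplification you are missing, however, is that for this particular lattice with $M$ squarefree there is \emph{no} nontrivial root-of-unity computation. Since the Atkin--Lehner involutions $W_D$ belong to $\SO^+(L)$, they preserve $L$ and carry $I_0$ to $I_D$; hence $(I_D,L)=(I_0,L)=\Z$ and $N_{I_D}=1$ for every $D\mid M$. Consequently the pairing $(\lambda,\ell'_{I_D})$ is an integer for every $\lambda\in L'\cap I_D^\perp$, so all the phases $e^{2\pi i(\lambda,\ell'_{I_D})j}$ in Theorem~\ref{thm:tr} equal~$1$. The orbit $L'_{-b^2/(4M),I_D}/\Gamma_{I_D}$ has exactly $N_{I_D}\cdot n_{I_D}(\lambda_0)=1\cdot b=b$ elements (by the parametrization in the proof of Lemma~\ref{lem:zd00}), and $n_{I_D}(\lambda)=b$ for each of them; so each contributes the same quantity $\sum_{j\ge1}c^+_{F,I_D}(-bj)$, and the factor of $b$ comes simply from the orbit count. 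Your anticipated ``selection via geometric sums of roots of unity'' is vacuous here---every $j$ is selected---so the bookkeeping you flag as the principal obstacle collapses immediately once you compute $N_{I_D}=1$. This is not a gap in your argument, just an overcomplication: carrying out what you describe would still produce the right answer.
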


When $M=p$ is a prime and $F$ is invariant under the Fricke involution, we obtain
Theorem \ref{thm:introtr} of the introduction. 

Now let $M=1$ and let $j=E_4^3/\Delta$ be the classical $j$-function. Write $J=j-744=q^{-1}+196884q+\dots$ for the normalized Hauptmodul for $\PSL_2(\Z)$  with vanishing constant term. Applying Theorem \ref{thm:trex} with $F=J$, we recover Zagier's original result \cite{Za2}:

\begin{corollary}
The generating series
\begin{align*}
-q^{-1} + 2+ \sum_{\substack{d\in \Z_{d>0}}} J(Y(d/4,\chi_1))\cdot q^{d}
\end{align*}
of the traces of singular moduli is a weakly holomorphic modular form for $\Gamma_0(4)$ of weight $3/2$ in the plus space.
\end{corollary}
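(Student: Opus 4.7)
The plan is to derive the corollary as a direct specialization of Theorem \ref{thm:trex} to the level-one case $M=1$ with input harmonic Maass form $F=J=j-744$. First I would check the hypotheses: when $M=1$, the lattice $L$ of \eqref{eq:exl} gives $X_\Gamma\cong X(1)$ and $\Iso(L)/\Gamma$ consists of a single orbit represented by $I_0$ with $D_{I_0}=1$. Since $J=q^{-1}+0+196884q+\cdots$ has $\ord_{c_{I_0}}(F)=-1$ and vanishing constant term, the assumptions of Theorem \ref{thm:trex} are satisfied provided we take the modulus $\mathfrak m=m_{I_0}\cdot (c_{I_0})$ with any $m_{I_0}\geq 2$.

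Next I would read off the nonzero Fourier coefficients of the principal part: $c_{F,I_0}^+(-1)=1$ and $c_{F,I_0}^+(-n)=0$ for all $n\geq 2$, together with $c_{F,I_0}^+(0)=0$. Plugging these into the explicit formula of Theorem \ref{thm:trex} with $M=1$ (so the only divisor $D$ of $M$ is $D=1$), the three sums collapse as follows. The first sum $-\sum_{b,n\geq 1} c^+_{F,I_0}(-bn)\,b\,q^{-b^2}$ is supported only at $b=n=1$, contributing $-q^{-1}$. The second sum $2\sum_{n\geq 1} c^+_{F,I_0}(-n)\cdot 1\cdot\sigma_1(n)$ is supported only at $n=1$, contributing $2\sigma_1(1)=2$. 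The third sum becomes $\sum_{d>0} J(Y(d/4,\chi_1))\,q^d$ unchanged.

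Combining these yields exactly
\[
\tr_J(A_\mathfrak{m}^{\operatorname{scal}})=-q^{-1}+2+\sum_{d>0}J(Y(d/4,\chi_1))\,q^d,
\]
and by Theorem \ref{thm:trex} this lies in $M_{3/2}^!(\Gamma_0(4))$. The fact that it belongs to the Kohnen plus-space is already built into the scalar-valued projection $f\mapsto f^{\operatorname{scal}}=f(\chi_1)(4\tau)$ described just before the statement of Theorem \ref{thm:trex}, which produces plus-space forms out of $\omega_L^\vee$-valued forms. There is essentially no obstacle here: the entire content lies in Theorem \ref{thm:trex} and its underlying modularity result Theorem \ref{thm:mod}; the only task is to verify that the hypotheses hold and to specialize the coefficient formulas, both of which are immediate for $M=1$ and $F=J$.
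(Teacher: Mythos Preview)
Your proposal is correct and follows exactly the paper's approach: the paper simply states that applying Theorem \ref{thm:trex} with $M=1$ and $F=J$ recovers Zagier's result, and you have carried out precisely that specialization with the hypotheses checked and the principal-part sums evaluated. There is nothing to add.
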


\section{Generalizations}

In the section
we describe some variants of our main results and indicate possible generalizations.

\subsection{Modularity in the generalized class group}

In the definition of the Heegner divisors $Z(d)$ we have projected to degree $0$ divisors by subtracting a suitable multiple of $(c_{I_0})$. We now briefly describe what happens if we do not apply this projection and consider the divisors $Y(d,\varphi)$ defined in \eqref{eq:defheeg} for $d>0$. 
Then the corresponding generating series is a non-holomorphic modular form, where the non-holomorphic part is coming from a generalization of Zagier's weight $3/2$ Eisenstein series.

We let $\Cl_\frakm(X)$ be the generalized class group of $X$ with respect to the modulus $\frakm$, which we define as the quotient of the group of divisors on $X$ defined over $k$ modulo the subgroup $P_\frakm(X)$. Moreover, in analogy with \eqref{eq:defja} we put
\begin{align}
\Cl_\frakm^{\ii}(X)=\Cl_\frakm(X)/ H_{\G_m,\frakm}.
\end{align}
If $d>0$, we write $[Y(d,\varphi)]_\frakm$ for the class of the divisor $Y(d,\varphi)$ in $\Cl_\frakm(X)$. For $d=0$ we put $[Y(0,\varphi)]_\frakm=\varphi(0)[\calM_{-1}]_\frakm$ where the class in $\Cl_\frakm(X)$ of a line bundle $\calL$ is defined as
in \eqref{eq:cl} but without the summand $\deg(\calL)\cdot (s_0)$.
Finally, for $d<0$ we let $[Y(d,\varphi)]_\frakm = [Z(d,\varphi)]_\frakm$.

Recall from \cite[Theorem 3.5]{Fu} that there is a (non-holomorphic) weight $3/2$ Eisenstein series $E_{3/2,L}(\tau)$ whose coefficients with non-negative index are given by the degrees of the $Y(d,\varphi)$ (see also \cite{Ku:Integrals}). It is a harmonic Maass form of weight $3/2$ for the group $\Mp_2(\Z)$ with representation $\omega_L^\vee$ and generalizes Zagier's non-holomorphic Eisenstein series \cite{Za1}. Its Fourier expansion decomposes as
\[
E_{3/2,L}(\tau)=E_{3/2,L}^+(\tau)+E_{3/2,L}^-(\tau),
\]
where the holomorphic part is the generating series of the degrees of Heegner divisors,
\[
E_{3/2,L}^+(\tau)=\sum_{d\geq 0} \deg(Y(d))\cdot q^d,
\]
and the non-holomorphic part $E_{3/2,L}^-$ is a period integral of a linear combination of unary theta series. We obtain the following variant of Theorem~\ref{thm:mod}.

\begin{theorem}
The generating series
\begin{align*}
\label{eq:genser2}
\tilde A_\frakm(\tau)= \sum_{d\in \frac{1}{N}\Z}
[Y(d)]_\frakm\cdot q^{d} + E_{3/2,L}^-\cdot (c_{I_0})
\end{align*}
is a non-holomorphic modular form of weight $3/2$ for $\Mp_2(\Z)$ with representation
$\omega_L^\vee$ with values in $\Cl_\frakm^\ii(X)$.
Moreover, we have
\[
A_\frakm = \tilde A_\frakm - E_{3/2,L}\cdot (c_{I_0}).
\]
\end{theorem}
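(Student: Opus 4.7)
The plan is to deduce this from Theorem~\ref{thm:mod} by proving the identity $\tilde A_\frakm = A_\frakm + E_{3/2,L}\cdot (c_{I_0})$ coefficientwise, and then using that both summands on the right are (components of) non-holomorphic modular forms of weight $3/2$ with representation $\omega_L^\vee$.

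First I would compare the coefficients $[Y(d)]_\frakm$ and $[Z(d)]_\frakm$ case by case. For $d<0$ the two classes agree by the definition of $[Y(d,\varphi)]_\frakm$ given just before the theorem. For $d>0$, the construction $Z(d,\varphi)=Y(d,\varphi)-\deg(Y(d,\varphi))\cdot(c_{I_0})$ (recalled from Section~\ref{subsec:Heegner}) yields
\[
[Y(d,\varphi)]_\frakm - [Z(d,\varphi)]_\frakm = \deg(Y(d,\varphi))\cdot (c_{I_0})
\]
in $\Cl_\frakm^{\ii}(X)$, under the natural embedding $J_\frakm^\ii(X)\hookrightarrow \Cl_\frakm^\ii(X)$. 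For $d=0$, the line bundle class in $\Cl_\frakm(X)$ (defined without subtracting $\deg(\calL)\cdot(s_0)$) differs from the class in $J_\frakm(X)$ from \eqref{eq:cl} precisely by $\deg(\calM_{-1})\cdot (c_{I_0})$, so
\[
[Y(0,\varphi)]_\frakm - [Z(0,\varphi)]_\frakm = \varphi(0)\deg(\calM_{-1})\cdot (c_{I_0}) = \deg(Y(0,\varphi))\cdot(c_{I_0}),
\]
matching the convention used in the formula for $E_{3/2,L}^+$.

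Next I would invoke the description of $E_{3/2,L}$ recalled from \cite[Theorem 3.5]{Fu}: the holomorphic part $E_{3/2,L}^+$ has the $q$-expansion $\sum_{d\geq 0}\deg(Y(d))\cdot q^d$ and the non-holomorphic part $E_{3/2,L}^-$ is precisely the shadow piece of this harmonic Maass form. Combining with the previous step,
\begin{align*}
\tilde A_\frakm(\tau) - A_\frakm(\tau) &= \sum_{d\in\frac{1}{N}\Z}\bigl([Y(d)]_\frakm-[Z(d)]_\frakm\bigr)q^d + E_{3/2,L}^-(\tau)\cdot(c_{I_0}) \\
&= \Bigl(\sum_{d\geq 0}\deg(Y(d))\cdot q^d\Bigr)\cdot(c_{I_0}) + E_{3/2,L}^-(\tau)\cdot(c_{I_0}) \\
&= E_{3/2,L}(\tau)\cdot (c_{I_0}),
\end{align*}
which is the second assertion of the theorem.

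Finally, modularity of $\tilde A_\frakm$ is immediate from this identity: by Theorem~\ref{thm:mod}, $A_\frakm$ is a weakly holomorphic modular form of weight $3/2$ for $\Mp_2(\Z)$ with representation $\omega_L^\vee$ with values in $J_\frakm^\ii(X)_\C\subset \Cl_\frakm^\ii(X)_\C$, and $E_{3/2,L}\cdot(c_{I_0})$ is a non-holomorphic harmonic Maass form of the same weight and representation taking values in the one-dimensional subspace spanned by $(c_{I_0})\in \Cl_\frakm^\ii(X)_\C$. Their sum therefore lies in the space of non-holomorphic modular forms of weight $3/2$ with representation $\omega_L^\vee$ valued in $\Cl_\frakm^\ii(X)_\C$. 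The only delicate point in the argument is the bookkeeping at $d=0$ between the convention \eqref{eq:cl} used for $[\calM_{-1}]_\frakm\in J_\frakm(X)$ and the corresponding class in $\Cl_\frakm(X)$ without the $\deg(\calL)\cdot(s_0)$ correction term; once this is reconciled with the normalization of the $q^0$-coefficient of $E_{3/2,L}^+$, the result follows.
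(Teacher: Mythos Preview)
Your proposal is correct and is precisely the argument the paper has in mind: the theorem is stated without proof as a direct ``variant of Theorem~\ref{thm:mod}'', and the identity $A_\frakm=\tilde A_\frakm-E_{3/2,L}\cdot(c_{I_0})$ is included in the statement exactly so that modularity of $\tilde A_\frakm$ follows from Theorem~\ref{thm:mod} together with the modularity of $E_{3/2,L}$. Your coefficientwise comparison, including the bookkeeping at $d=0$ between the two conventions for the class of $\calM_{-1}$, is the intended verification.
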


\subsection{Twists by genus characters}
Let $L$ be the lattice of Section~\ref{sect:5.1} for a squarefree $M\in \Z_{>0}$, and recall that $\Gamma\cong \Gamma_0(M)$.
For a discriminant $\Delta \neq 1$ and $r \in \Z$ such that $\Delta \equiv r^2 \bmod 4M$, we can define a generalized genus character $\chi_\Delta$ on $L'$ as in \cite[Section~I.2]{GKZ} and \cite[Section~4]{BO},
$$
\chi_\Delta(\lambda) =
\begin{cases}
   \leg{\Delta}{n},  & \text{if } \Delta \mid b^2 - 4Mac \text{ and } (b^2 - 4Mac)/\Delta \text{ is a square }\\
& \text{modulo } 4M \text{ and } \gcd(a, b,c , \Delta) = 1,\\
0, & \text{ otherwise,}
\end{cases}
$$
with $\lambda = \zxz{b/2M}{-a/M}{c}{-b/2M} \in L'$ and $n \in \Z$ any integer prime to $\Delta$ represented by one of the quadratic forms $[M_1 a, b, M_2 c]$ with $M_1 M_2 = M$.
Note that $\chi_\Delta$ is $\SO^+(L)$-invariant.

If $\lambda\in L'$ with $Q(\lambda)\in -4M(\Q^\times)^2$, let $I$ be the isotropic line associated with $\lambda$, and let $h_{\Delta, \lambda} \in \Q(\sqrt{\Delta})(X)^\times$ be a rational function with the following expansions
$$
h_{\Delta, \lambda} =
\begin{cases}
\displaystyle\prod_{b \in \Z/\Delta \Z}   \left(
1 - e^{2\pi i b/\Delta} q_{I}^{n_{I}(\lambda)}
\right)^{\leg{\Delta}{b}}
 + O(q_{I}^{m_{I}}),
& \text{ at the cusp } c_I,\\[3ex]
1 + O(q_J^{m_J}), & \text{ at all other cusps }c_J.
\end{cases}
$$
%
%
Suppose that $(\Delta, 2M) = 1$, or equivalently $(r, 2M) = 1$.
For each $d \in \frac{1}{4M} \Z$ and $\varphi \in S_L$, we can define the divisor $Z_{\Delta, r}(d, \varphi) \in \Div^0(X)_\C$ by
\begin{align*}
  Z_{\Delta, r}(d, \varphi) :=
  \begin{cases}
  \displaystyle
\sum_{\lambda \in L'_{d |\Delta|}/\Gamma }
\frac{1 }{2 |\Gamma_{\lambda}|}
\chi_\Delta(\lambda) \varphi(r^{-1}  \lambda)
\cdot
(z_\lambda) , & d > 0,\\[1ex]
\displaystyle \sum_{\lambda \in L'_{d/ |\Delta|} / \Gamma}
\frac{1}{2 n_I(\lambda)}\big( \varphi(r \lambda) + \mathrm{sgn}(\Delta) \varphi(-r \lambda) \big)
\cdot \mathrm{div}(h^{-1}_{\Delta, \lambda}) , & d \in -\frac{|\Delta|}{4M}(\Z_{>0})^2,\\[1ex]    \displaystyle 0,  & \text{otherwise.}
  \end{cases}
\end{align*}
All these divisors are defined over $\Q(\sqrt{\Delta})$ and have coefficients in the field of definition of $\varphi$.
We write $[Z_{\Delta, r}(d)]_\mf \in S^\vee_L \otimes J^\ii_\mf(X)$ for the element that sends $\varphi$ to $[Z_{\Delta, r}(d, \varphi)]_\mf \in J^\ii_\mf(X)$.
Define the representation $\tilde{\omega}_L$ to be $\omega_L$ if $\Delta > 0$ and $\overline{\omega}_L$ if $\Delta < 0$.
Then we have the following abstract modularity result.
\begin{theorem}
\label{thm:twist}
The generating series
$$
A_{\Delta, r, \mf}(\tau) := \sum_{d \in \frac{1}{4M}\Z} [Z_{\Delta, r}(d)]_\mf \cdot  q^d \in S_L^\vee ((q)) \otimes J^\ii_\mf(X)_\C
$$
is the $q$-expansion of a weakly holomorphic modular form in $M^!_{3/2}(\tilde{\omega}_L) \otimes J^\ii_\mf(X)_\C$.
\end{theorem}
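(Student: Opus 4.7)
The plan is to mimic the proof of Theorem \ref{thm:mod}, replacing the untwisted Borcherds lift by its $\chi_\Delta$-twisted counterpart. Applying Proposition \ref{prop:modcrit} with $\rho = \tilde\omega_L$, it suffices to show that for every $f = \sum_d c(d) q^d \in M^!_{1/2}(\tilde\omega_L^\vee)$ with integral Fourier coefficients,
\[
\sum_{d \in \Q} \bigl(c(-d),\, [Z_{\Delta,r}(d)]_\frakm\bigr) = 0 \quad \text{in } J^\ii_\frakm(X)_\C;
\]
the reduction to integral coefficients via McGraw's basis is identical to the untwisted case.

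To produce this relation, I would invoke the twisted singular theta lift. To each such $f$ one associates a rational function $\Psi_{\Delta,r}(z,f)$ on $X$, defined over $\Q(\sqrt\Delta)$, whose divisor on $Y_\Gamma$ equals $\sum_{d > 0} (c(-d), Z_{\Delta,r}(d))$; its weight is zero because the twisted Weyl-vector contribution and the $c(0,0)$-term both vanish for $\Delta \neq 1$. This lift was constructed by Bruinier--Ono at the cusp $\infty$, and its analysis can be extended to the present setting by the techniques of the singular theta correspondence. The essential analytic input for the proof is the infinite product expansion at each cusp $c_I$, which takes the shape
\[
\Psi_{\Delta,r}(w,f) = C_I \prod_{\substack{\lambda \in (L' \cap I^\perp)/I \\ n_I(\lambda) > 0}} \prod_{b \bmod \Delta} \Bigl(1 - e^{2\pi i b/\Delta} q_I^{n_I(\lambda)}\Bigr)^{\leg{\Delta}{b}\, c(-Q(\lambda)/|\Delta|,\, r\lambda)},
\]
with $C_I \in \Q(\sqrt\Delta)^\times$. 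The inner product over $b \bmod \Delta$ is precisely the local building block $h_{\Delta,\lambda}$ appearing in the definition of $Z_{\Delta,r}(d,\varphi)$ for $d < 0$.

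With this expansion in hand, the argument proceeds exactly as in Theorem \ref{thm:mod}: divide $\Psi_{\Delta,r}$ by a finite product of the factors $h_{\Delta,\lambda}^{c(-Q(\lambda)/|\Delta|,\, r\lambda)}$ over $\lambda$ with $0 < n_I(\lambda) < m_I$, and by a function $u_{D,(C_I)} \in H_{\G_m,\frakm}$ absorbing the constants $C_I$, to obtain a rational function satisfying the trivialization condition \eqref{eq:scond} at every cusp. Passing to $J^\ii_\frakm(X)_\C$, the resulting divisor identity assembles the $d>0$ and $d<0$ pieces into the desired pairing, with a twisted analogue of Lemma \ref{lem:zd00} handling the $d<0$ terms. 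The $d = 0$ contribution is absent, since $Z_{\Delta,r}(0,\varphi) = 0$ by definition.

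The main obstacle will be establishing the twisted product expansion at every cusp $c_I$ and matching the combinatorial data precisely. The untwisted expansion is classical, and the twisted case at $\infty$ was treated by Bruinier--Ono; the new step is to track how conjugation by each Atkin-Lehner involution $W_D$ interacts with $\chi_\Delta$, thereby transporting the expansion from $\infty$ to every cusp $I_D$, and to verify that the resulting factors regroup cleanly into the $h_{\Delta,\lambda}$. The sign twist $\sgn(\Delta)\varphi(-r\lambda)$ in the $d<0$ definition reflects both the behavior of $\chi_\Delta$ under $\lambda \mapsto -\lambda$ and the switch between $\omega_L$ and $\overline\omega_L$ when $\Delta < 0$; correctly matching these signs against the logarithmic derivative of the Gauss-type factor $\prod_b(1 - e^{2\pi i b/\Delta} q_I^{n_I(\lambda)})^{\leg{\Delta}{b}}$ will require careful bookkeeping.
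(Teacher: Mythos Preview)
Your approach is correct and is in fact explicitly mentioned in the paper as an alternative proof: the paper notes that one could use the twisted Borcherds products of \cite[Theorem~6.1]{BO} to argue exactly as in the proof of Theorem~\ref{thm:mod}. However, this is not the route the paper takes as its primary argument.

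The paper instead deduces Theorem~\ref{thm:twist} from the \emph{untwisted} Theorem~\ref{thm:mod} by an intertwining operator. Concretely, one applies Theorem~\ref{thm:mod} to the rescaled lattice $(\Delta L,\, Q(\cdot)/|\Delta|)$, obtaining an untwisted generating series valued in $S_{\Delta L}^\vee$, and then applies the intertwining map of \cite[Section~3]{AE} (which projects from $S_{\Delta L}$ to $S_L$ using the genus character $\chi_\Delta$). Tracking how this operator acts on the classes $[Z(d)]_\frakm$ for the rescaled lattice recovers precisely the twisted classes $[Z_{\Delta,r}(d)]_\frakm$, and the intertwining property yields modularity for $\tilde\omega_L$. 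This bypasses entirely the need to establish twisted product expansions at every cusp, which you correctly identify as the main work in your approach. The trade-off: the paper's argument is shorter and more conceptual, reducing the twisted statement to the untwisted one already in hand, whereas your direct approach is more self-contained and makes the local structure at each cusp (the factors $h_{\Delta,\lambda}$) visibly match the Borcherds product, at the cost of redoing the Atkin--Lehner bookkeeping you describe.
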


This result comes out of calculating the effect of the intertwining operator in \cite[Section~3]{AE} applied to the generating series $A_\mf(\tau)$ associated to the scaled lattice $(\Delta L, \tfrac{Q(\cdot)}{|\Delta|})$.
The conditions that $M$ is square-free and $(\Delta, 2M) = 1$ are imposed to simplify the definition of $Z_{\Delta, r}(d, \varphi)$ and can be removed with a more complicated definition of the classes.
Note that it is necessary for $\mathrm{sgn}(\Delta)$, which determines the parity of $\tilde{\omega}_L$, to appear in the definition of $Z_{\Delta,r}(d,\varphi)$.
Alternatively, one could use the twisted Borcherds products in \cite[Theorem 6.1]{BO} to give a proof of Theorem \ref{thm:twist} along the same line as that of Theorem \ref{thm:mod} above.
By applying the functionals of Proposition \ref{prop:psi} to the twisted generating series of Theorem~\ref{thm:twist}, the main result
of \cite{AE} on twisted traces of harmonic Maass forms can be recovered.

\subsection{Other orthogonal Shimura varieties}
The Gross-Kohnen-Zagier theorem has been generalized 
to higher dimensional orthogonal Shimura varieties in \cite{Bo2}. 
Hence it is natural to ask whether our main results can also be generalized in the same direction. 
Let $L$ be an even lattice of signature $(n,2)$, and let $\Gamma$ be the discriminant kernel subgroup of $\SO^+(L)$.
Denote by $X_\Gamma$ a (suitable) toroidal compactification of the connected Shimura variety $Y_\Gamma$ associated to $\Gamma$. It would be interesting to define a generalized divisor class group as the group of divisors on $X_\Gamma$ modulo divisors of rational functions that satisfy certain growth conditions along the boundary of $X_\Gamma$.
 Is it possible to prove a modularity result analogous to Theorem \ref{thm:mod} for the classes of special divisors?
In this context, the product expansions obtained in \cite{Ku:BP}
with respect to one dimensional Baily-Borel boundary components may be helpful.

To illustrate this question, let us consider the easiest case for $n=2$ where the lattice $L$ is the even unimodular lattice of signature $(2,2)$.
Then the variety $X_\Gamma$ can be identified with  the product $X(1)\times X(1)$ of two copies of the compact modular curve of level $1$.
Special divisors on $X_\Gamma$ of positive index $d$ in the sense of \cite{Ku:Duke} are given by the Hecke correspondences $Z(d)$.
Let $q=(q_1,q_2)$ be the usual local coordinates near the boundary point $s=(\infty,\infty)\in X_\Gamma$. Let $m$ be a positive integer, and put
$\frakm=m\cdot (s)$.
If $k=(k_1,k_2)\in \Z^2$ we briefly write $q^k=q_1^{k_1}q_2^{k_2}$, and
for a meromorphic function $f$ in a neighborhood of $(\infty,\infty)$ we
write
$f=O(q^m)$ if in the Taylor expansion of $f$ at $(\infty,\infty)$  only terms of total degree at least $m$ occur.

Let $\Div_\frakm(X_\Gamma)$ be the free abelian group generated by pairs $(D,g_D)$, where $D$ is a prime Weil divisor on $X_\Gamma$ and $g_D$ is a local equation for $D$ in a small neighborhood of $s$. The local equations give rise to local equations $g_D$ near $s$ for arbitrary Weil divisors $D$.  Let $P_\frakm(X_\Gamma)$ be the subgroup of pairs $(D,g_D)\in \Div_\frakm(X_\Gamma)$ for which $D=\dv(f)$ is the divisor of a meromorphic function $f$ satisfying
\[
f\cdot g_D^{-1} = 1+O(q^m)
\]
near $s$. We define a generalized  class group as the quotient
$\Cl_\frakm(X_\Gamma)= \Div_\frakm(X_\Gamma)/P_\frakm(X_\Gamma)$.
It would be interesting to define suitable classes of special divisors in $\Cl_\frakm(X_\Gamma)$ of arbitrary integral index $d$ and to prove a modularity result for the generating series of these classes.

\end{document}